\numberwithin{equation}{section}
\numberwithin{figure}{section}
\theoremstyle{plain}
\newtheorem{thm}{\protect\theoremname}
  \theoremstyle{definition}
  \newtheorem{defn}[thm]{\protect\definitionname}
  \theoremstyle{plain}
  \newtheorem{lem}[thm]{\protect\lemmaname}
  \theoremstyle{definition}
  \newtheorem{example}[thm]{\protect\examplename}
  \theoremstyle{plain}
  \newtheorem{cor}[thm]{\protect\corollaryname}
  \theoremstyle{remark}
  \newtheorem*{claim*}{\protect\claimname}
  \providecommand{\claimname}{Claim}
  \providecommand{\corollaryname}{Corollary}
  \providecommand{\definitionname}{Definition}
  \providecommand{\examplename}{Example}
  \providecommand{\lemmaname}{Lemma}
\providecommand{\theoremname}{Theorem}
\begin{document}

\title{Epic substructures and primitive positive functions}

\author{Miguel Campercholi}
\begin{abstract}
For $\mathbf{A}\leq\mathbf{B}$ first order structures in a class
$\mathcal{K}$, say that $\mathbf{A}$ is an epic substructure of
$\mathbf{B}$ in $\mathcal{K}$ if for every $\mathbf{C}\in\mathcal{K}$
and all homomorphisms $g,g^{\prime}:\mathbf{B}\rightarrow\mathbf{C}$,
if $g$ and $g'$ agree on $A$, then $g=g'$. We prove that $\mathbf{A}$
is an epic substructure of $\mathbf{B}$ in a class $\mathcal{K}$
closed under ultraproducts if and only if $A$ generates $\mathbf{B}$
via operations definable in $\mathcal{K}$ with primitive positive
formulas. Applying this result we show that a quasivariety of algebras
$\mathcal{Q}$ with an $n$-ary near-unanimity term has surjective
epimorphisms if and only if $\mathbb{SP}_{n}\mathbb{P}_{u}(\mathcal{Q}_{RSI})$
has surjective epimorphisms. It follows that if $\mathcal{F}$ is
a finite set of finite algebras with a common near-unanimity term,
then it is decidable whether the (quasi)variety generated by $\mathcal{F}$
has surjective epimorphisms.
\end{abstract}

\keywords{Epimorphism, epic substructure, Beth definability, definable function.}

\maketitle

\section{Introduction}

Let $\mathcal{K}$ be a class of first order structures in the same
signature, and let $\mathbf{A},\mathbf{B}\in\mathcal{K}$. We say
that $\mathbf{A}$ is an \emph{epic substructure} of $\mathbf{B}$
in $\mathcal{K}$ provided that $\mathbf{A}$ is a substructure of
$\mathbf{B}$, and for every $\mathbf{C}\in\mathcal{K}$ and all homomorphisms
$g,g^{\prime}:\mathbf{B}\rightarrow\mathbf{C}$ such that $g|_{A}=g^{\prime}|_{A}$,
we have $g=g^{\prime}$. That is, if $g$ and $g'$ agree on $A$,
then they must agree on all of $B$. At first glance the definition
may suggest that $A$ generates $\mathbf{B}$, but on closer inspection
this does not make sense. As $\mathbf{A}$ is a substructure of $\mathbf{B}$,
generating with $A$ will yield exactly $\mathbf{A}$. However, as
the main result of this article shows, the intuition that $A$ acts
as a set of generators of $\mathbf{B}$ is not far off. In fact, if
$\mathcal{K}$ is closed under ultraproducts, we prove that $A$ actually
``generates'' $\mathbf{B}$, only that the generation is not through
the fundamental operations but rather through primitive positive definable
functions. Let's take a look at an example. Write $\mathcal{D}$ for
the class of bounded distributive lattices. There are several ways
to show that both of the three-element chains contained in the bounded
distributive lattice $\mathbf{B}:=\mathbf{2}\times\mathbf{2}$ are
epic substructures of $\mathbf{B}$ in $\mathcal{D}$. One way to
do this is via definable functions. Note that the formula 
\[
\varphi(x,y):=x\wedge y=0\,\&\, x\vee y=1
\]
defines the complement (partial) operation in every member of $\mathcal{D}$.
Let $\mathbf{A}$ be the sublattice of $\mathbf{B}$ with universe
$\{\left\langle 0,0\right\rangle ,\left\langle 0,1\right\rangle ,\left\langle 1,1\right\rangle \}$,
and suppose there are $\mathbf{C}\in\mathcal{D}$ and $g,g^{\prime}:\mathbf{B}\rightarrow\mathbf{C}$
such that $g|_{A}=g^{\prime}|_{A}$. Clearly $\mathbf{B}\vDash\varphi(\left\langle 0,1\right\rangle ,\left\langle 1,0\right\rangle )$,
and since $\varphi$ is open and positive, it follows that $\mathbf{C}\vDash\varphi(g\langle0,1\rangle,g\langle1,0\rangle)$
and $\mathbf{C}\vDash\varphi(g'\langle0,1\rangle,g'\langle1,0\rangle)$.
Now $\varphi(x,y)$ defines a function in $\mathbf{C}$, and $g\langle0,1\rangle=g'\langle0,1\rangle$,
so $g\langle1,0\rangle=g'\langle1,0\rangle$. Theorem \ref{epic sii pp definable}
below says that every epic substructure in a class closed under ultraproducts
is of this nature (although the formulas defining the generating operations
may be primitive positive).

The notion of epic substructure is closely connected to that of epimorphism.
Recall that a homomorphism $h:\mathbf{A}\rightarrow\mathbf{B}$ is
a $\mathcal{K}$\emph{-epimorphism} if for every $\mathbf{C}\in\mathcal{K}$
and homomorphisms $g,g^{\prime}:\mathbf{B}\rightarrow\mathbf{C}$,
if $gh=g^{\prime}h$ then $g=g^{\prime}$. That is, $h$ is right-cancellable
in compositions with $\mathcal{K}$-morphisms. Of course every surjective
homomorphism is an epimorphism, but the converse is not true. Revisiting
the example above, the inclusion of the three-element chain $\mathbf{A}$
into $\mathbf{2}\times\mathbf{2}$ is a $\mathcal{D}$-epimorphism.
This also illustrates the connection between epic substructures and
epimorphisms. It is easily checked that $\mathbf{A}$ is an epic substructure
of \textbf{$\mathbf{B}$} in $\mathcal{K}$ if and only if the inclusion
$\iota:A\rightarrow B$ is a $\mathcal{K}$-epimorphism. A class $\mathcal{K}$
is said to have the \emph{surjective epimorphisms }if every $\mathcal{K}$-epimorphism
is surjective. Although this property is of an algebraic (or categorical)
nature it has an interesting connection with logic. When $\mathcal{K}$
is the algebraic counterpart of an algebraizable logic $\vdash$ then:
$\mathcal{K}$ has surjective epimorphisms if and only if $\vdash$
has the (infinite) Beth property (\cite[Thm. 3.17]{beth-block-hoogland}).
For a thorough account on the Beth property in algebraic logic see
\cite{beth-block-hoogland}. We don't go into further details on this
topic as the focus of the present article is on the algebraic and
model theoretic side.

The paper is organized as follows. In the next section we establish
our notation and the preliminary results used throughout. Section
\ref{sec:Main-Theorem} contains our characterization of epic substructures
(Theorem \ref{epic sii pp definable}), the main result of this article.
We also take a look here at the case where $\mathcal{K}$ is a finite
set of finite structures. In Section \ref{sec:Checking-for-epic}
we show that checking for the presence of proper epic subalgebras
(or, equivalently, surjective epimorphisms) in certain quasivarieties
can be reduced to checking in a subclass of the quasivariety. An interesting
application of these results is that if $\mathcal{F}$ is a finite
set of finite algebras with a common near-unanimity term, then it
is decidable whether the quasivariety generated by $\mathcal{F}$
has surjective epimorphisms (see Corollary \ref{cor:NU implica decidible}).

\section{Preliminaries and Notation}

Let $\mathcal{L}$ be a first order language and $\mathcal{K}$ a
class of $\mathcal{L}$-structures. We write $\mathbb{I},\mathbb{S},\mathbb{H},\mathbb{P}$
and $\mathbb{P}_{u}$ to denote the class operators for isomorphisms,
substructures, homomorphic images, products and ultraproducts, respectively.
We write $\mathbb{V}(\mathcal{K})$ for the variety generated by $\mathcal{K}$,
that is $\mathbb{HSP}(\mathcal{K})$; and with $\mathbb{Q}(\mathcal{K})$
we denote the quasivariety generated by $\mathcal{K}$, i.e., $\mathbb{ISPP}_{u}(\mathcal{K})$. 
\begin{defn}
Let $\mathbf{A},\mathbf{B}\in\mathcal{K}$.
\begin{itemize}
\item $\mathbf{A}$ is an \emph{epic substructure} \emph{of $\mathbf{B}$
in} $\mathcal{K}$ if $\mathbf{A}\leq\mathbf{B}$, and for every $\mathbf{C}\in\mathcal{K}$
and all homomorphisms $g,g^{\prime}:\mathbf{B}\rightarrow\mathbf{C}$
such that $g|_{A}=g^{\prime}|_{A}$, we have $g=g^{\prime}$. Notation:
$\mathbf{A}\leq_{e}\mathbf{B}$ in $\mathcal{K}$.
\item A homomorphism $h:\mathbf{A}\rightarrow\mathbf{B}$ is a $\mathcal{K}$\emph{-epimorphism}
if for every $\mathbf{C}\in\mathcal{K}$ and homomorphisms $g,g^{\prime}:\mathbf{B}\rightarrow\mathbf{C}$,
if $gh=g^{\prime}h$ then $g=g^{\prime}$.
\end{itemize}
\end{defn}
We say that $\mathbf{A}$ is a \emph{proper} epic substructure of
$\mathbf{B}$ in $\mathcal{K}$ (and write $\mathbf{A}<_{e}\mathbf{B}$
in $\mathcal{K}$), if $\mathbf{A}\leq_{e}\mathbf{B}$ in $\mathcal{K}$
and $\mathbf{A}\neq\mathbf{B}$.

The next lemma explains the connection between epic substructures
and epimorphisms.
\begin{lem}
If $h:\mathbf{A}\rightarrow\mathbf{B}$ with $\mathbf{A},\mathbf{B},h(\mathbf{A})\in\mathcal{K}$,
then t.f.a.e.:
\begin{enumerate}
\item $h$ is a $\mathcal{K}$-epimorphism.
\item The inclusion map $\iota:h(\mathbf{A})\rightarrow\mathbf{B}$ is a
$\mathcal{K}$-epimorphism.
\item $h(\mathbf{A})\leq_{e}\mathbf{B}$ in $\mathcal{K}$.
\end{enumerate}
\end{lem}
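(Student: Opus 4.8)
The plan is to show that all three statements are merely different phrasings of a single condition, namely that any two homomorphisms out of $\mathbf{B}$ agreeing on $h(A)$ must coincide. The crucial observation, which I would isolate at the outset, is that for any $\mathbf{C}\in\mathcal{K}$ and homomorphisms $g,g':\mathbf{B}\to\mathbf{C}$ one has $gh=g'h$ if and only if $g|_{h(A)}=g'|_{h(A)}$. The forward direction is immediate, since every element of $h(A)$ has the form $h(a)$ for some $a\in A$; conversely, if $g$ and $g'$ agree on $h(A)$ then in particular they agree on each $h(a)$, which is exactly $gh=g'h$. This equivalence is the engine of the whole argument.

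Then $(1)\Leftrightarrow(3)$ follows by unwinding the definitions: $h$ is a $\mathcal{K}$-epimorphism precisely when $gh=g'h$ forces $g=g'$ for all such $\mathbf{C}$, $g$, $g'$, which by the observation is the same as requiring $g|_{h(A)}=g'|_{h(A)}$ to force $g=g'$, i.e.\ $h(\mathbf{A})\leq_e\mathbf{B}$ in $\mathcal{K}$. Here I would note that $h(\mathbf{A})\leq\mathbf{B}$ holds automatically because $h$ is a homomorphism, so the statement $h(\mathbf{A})\leq_e\mathbf{B}$ is well posed; and $h(\mathbf{A})\in\mathcal{K}$ is given among the hypotheses.

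For $(2)\Leftrightarrow(3)$ I would apply the same observation with the inclusion $\iota:h(\mathbf{A})\to\mathbf{B}$ in place of $h$: now $g\iota=g'\iota$ says exactly that $g$ and $g'$ agree on $h(A)=\iota(h(A))$, so $\iota$ is a $\mathcal{K}$-epimorphism if and only if $h(\mathbf{A})\leq_e\mathbf{B}$ in $\mathcal{K}$. This last equivalence is really nothing more than the definition of epic substructure read against the definition of epimorphism for the inclusion map.

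Since the whole argument is a direct translation between the definitions, I do not expect any genuine obstacle. The only point requiring a little care is recognizing $gh=g'h$ and $g|_{h(A)}=g'|_{h(A)}$ as the same condition—that the image of $h$ is precisely the set on which agreement is being tested—and keeping the hypotheses $\mathbf{A},\mathbf{B},h(\mathbf{A})\in\mathcal{K}$ in view throughout, so that every morphism and substructure invoked genuinely lives in $\mathcal{K}$.
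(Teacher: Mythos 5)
Your proof is correct and matches the paper's approach: the paper simply states that the lemma is ``immediate from the definitions,'' and your write-up supplies exactly the routine unwinding (the observation that $gh=g'h$ is the same condition as $g|_{h(A)}=g'|_{h(A)}$) that makes this immediacy explicit. No gaps; the care you take with $h(\mathbf{A})\leq\mathbf{B}$ and the hypothesis $h(\mathbf{A})\in\mathcal{K}$ is exactly what is needed for statement (3) to be well posed.
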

\begin{proof}
Immediate from the definitions.
\end{proof}
Here are some straightforward facts used in the sequel.
\begin{lem}
Let $\mathbf{A},\mathbf{B}\in\mathcal{K}$.
\begin{enumerate}
\item $\mathbf{A}\leq_{e}\mathbf{B}$ in $\mathcal{K}$ iff $\mathbf{A}\leq_{e}\mathbf{B}$
in $\mathbb{ISP}(\mathcal{K})$
\item Let $\mathbf{A}\leq_{e}\mathbf{B}$ in $\mathcal{K}$ and suppose
$h:\mathbf{B}\rightarrow\mathbf{C}$ is such that $h(\mathbf{A}),h(\mathbf{B})\in\mathcal{K}$.
Then $h(\mathbf{A})\leq_{e}h(\mathbf{B})$ in $\mathcal{K}$.
\item Let $\mathcal{Q}$ be a quasivariety. T.f.a.e.:

\begin{enumerate}
\item $\mathcal{Q}$ has surjective epimorphisms.
\item For all $\mathbf{A},\mathbf{B}\in\mathcal{Q}$ we have that $\mathbf{A}\leq_{e}\mathbf{B}$
in $\mathcal{Q}$ implies $\mathbf{A}=\mathbf{B}$.
\end{enumerate}
\end{enumerate}
\end{lem}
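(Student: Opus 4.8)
The plan is to treat the three items in turn; each follows from the definitions together with the universal mapping properties of the class operators involved, so the arguments are short diagram chases.

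For item (1), the implication ``epic in $\mathbb{ISP}(\mathcal{K})$ $\Rightarrow$ epic in $\mathcal{K}$'' is immediate: since $\mathcal{K}\subseteq\mathbb{ISP}(\mathcal{K})$, every test structure available in $\mathcal{K}$ is already available in $\mathbb{ISP}(\mathcal{K})$, so being epic against the larger family is the stronger condition. For the converse, assume $\mathbf{A}\leq_{e}\mathbf{B}$ in $\mathcal{K}$ and fix $\mathbf{C}\in\mathbb{ISP}(\mathcal{K})$ together with homomorphisms $g,g^{\prime}:\mathbf{B}\to\mathbf{C}$ that agree on $A$. Up to isomorphism we may take $\mathbf{C}$ to be a substructure of a product $\prod_{i\in I}\mathbf{D}_{i}$ with each $\mathbf{D}_{i}\in\mathcal{K}$; write $\iota$ for the inclusion and $\pi_{i}$ for the $i$-th projection. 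For each $i$ the maps $\pi_{i}\iota g$ and $\pi_{i}\iota g^{\prime}$ are homomorphisms $\mathbf{B}\to\mathbf{D}_{i}$ agreeing on $A$, so the hypothesis $\mathbf{A}\leq_{e}\mathbf{B}$ in $\mathcal{K}$ forces $\pi_{i}\iota g=\pi_{i}\iota g^{\prime}$. As this holds in every coordinate we get $\iota g=\iota g^{\prime}$, and injectivity of $\iota$ yields $g=g^{\prime}$.

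For item (2), first note $h(\mathbf{A})\leq h(\mathbf{B})$. Let $\bar{h}:\mathbf{B}\to h(\mathbf{B})$ be the surjective corestriction of $h$, and take any $\mathbf{D}\in\mathcal{K}$ with homomorphisms $g,g^{\prime}:h(\mathbf{B})\to\mathbf{D}$ agreeing on $h(A)$. Then $g\bar{h}$ and $g^{\prime}\bar{h}$ are homomorphisms $\mathbf{B}\to\mathbf{D}$ that agree on $A$ (because $\bar{h}$ maps $A$ into $h(A)$), so $\mathbf{A}\leq_{e}\mathbf{B}$ in $\mathcal{K}$ gives $g\bar{h}=g^{\prime}\bar{h}$; surjectivity of $\bar{h}$ then gives $g=g^{\prime}$, as required.

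For item (3), the equivalence rests on the identification of epic substructures with epimorphic inclusions established in the preceding lemma, together with the closure of quasivarieties under substructures. For (a) $\Rightarrow$ (b), if $\mathbf{A}\leq_{e}\mathbf{B}$ in $\mathcal{Q}$ then the inclusion $\iota:\mathbf{A}\to\mathbf{B}$ is a $\mathcal{Q}$-epimorphism, hence surjective by hypothesis, which means $\mathbf{A}=\mathbf{B}$. For (b) $\Rightarrow$ (a), let $h:\mathbf{A}\to\mathbf{B}$ be a $\mathcal{Q}$-epimorphism; since $\mathcal{Q}$ is closed under substructures we have $h(\mathbf{A})\in\mathcal{Q}$, and the preceding lemma gives $h(\mathbf{A})\leq_{e}\mathbf{B}$ in $\mathcal{Q}$, whence $h(\mathbf{A})=\mathbf{B}$ by (b); that is, $h$ is surjective. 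I expect the only genuine (though still standard) step to be the forward direction of (1), namely the reduction through the product and the coordinatewise application of the epicity hypothesis; everything else is a direct unwinding of the definitions.
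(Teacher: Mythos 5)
Your proposal is correct: all three verifications (the coordinatewise projection argument for $\mathbb{ISP}$, the corestriction argument for homomorphic images, and the use of the preceding lemma plus closure of $\mathcal{Q}$ under substructures for the epimorphism equivalence) are exactly the routine unwindings the paper has in mind when it states these as ``straightforward facts'' without proof. There is nothing to add or correct.
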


\section{Main Theorem\label{sec:Main-Theorem}}

Recall that a \emph{primitive positive }(p.p.\ for brevity) formula
is one of the form $\exists\bar{y}\,\alpha(\bar{x},\bar{y})$ with
$\alpha(\bar{x},\bar{y})$ a finite conjunction of atomic formulas.
We shall need the following fact. 
\begin{lem}
\label{toda pp implica homo}(\cite[Thm. 6.5.7]{Hodges1993}) Let
$\mathbf{A},\mathbf{B}$ be $\mathcal{L}$-structures. T.f.a.e.:
\begin{enumerate}
\item Every primitive positive $\mathcal{L}$-sentence that holds in $\mathbf{A}$
holds in $\mathbf{B}$.
\item There is a homomorphism from $\mathbf{A}$ into an ultrapower of $\mathbf{B}$.
\end{enumerate}
\end{lem}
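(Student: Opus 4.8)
The plan is to prove the two implications separately, with $(1)\Rightarrow(2)$ being the substantial one.

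For $(2)\Rightarrow(1)$, suppose $h\colon\mathbf{A}\to\mathbf{B}^{I}/\mathcal{U}$ is a homomorphism and let $\sigma=\exists\bar{y}\,\alpha(\bar{y})$ be a p.p.\ sentence with $\mathbf{A}\vDash\sigma$, say witnessed by a tuple $\bar{a}$ from $A$. Since $\alpha$ is a conjunction of atomic formulas and homomorphisms preserve atomic formulas, $\mathbf{B}^{I}/\mathcal{U}\vDash\alpha(h(\bar{a}))$, whence $\mathbf{B}^{I}/\mathcal{U}\vDash\sigma$. By {\L}o\'s's theorem $\mathbf{B}^{I}/\mathcal{U}\equiv\mathbf{B}$, and as $\sigma$ is a first order sentence we conclude $\mathbf{B}\vDash\sigma$.

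The harder direction is $(1)\Rightarrow(2)$, which I would handle by a diagram-and-ultraproduct argument. Expand $\mathcal{L}$ to $\mathcal{L}_{A}:=\mathcal{L}\cup\{c_{a}:a\in A\}$ and let $\Phi$ be the positive atomic diagram of $\mathbf{A}$, i.e.\ the set of all atomic $\mathcal{L}_{A}$-sentences true in $\mathbf{A}$ when $c_{a}$ is read as $a$. The key observation is that a homomorphism from $\mathbf{A}$ into an $\mathcal{L}$-structure $\mathbf{M}$ is exactly the same datum as an expansion of $\mathbf{M}$ to a model of $\Phi$ (via $a\mapsto c_{a}^{\mathbf{M}}$), because the atomic sentences in $\Phi$ encode precisely the graphs of the fundamental operations and the fundamental relations of $\mathbf{A}$. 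Thus it suffices to realize $\Phi$ in a suitable expansion of some ultrapower of $\mathbf{B}$.

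To build this ultrapower I would glue together finitely many witnesses supplied by hypothesis (1). For each finite $\Delta\subseteq\Phi$, replacing the constants $c_{a_{1}},\dots,c_{a_{k}}$ occurring in $\Delta$ by variables and existentially quantifying turns $\bigwedge\Delta$ into a p.p.\ sentence $\exists\bar{y}\,\alpha_{\Delta}(\bar{y})$ true in $\mathbf{A}$; by (1) it holds in $\mathbf{B}$, so I may choose a witnessing tuple $\bar{b}^{\Delta}$ in $B$. Let $I$ be the set of finite subsets of $\Phi$, and for $a\in A$ define $g_{a}\colon I\to B$ by letting $g_{a}(\Delta)$ be the component of $\bar{b}^{\Delta}$ corresponding to $c_{a}$ when $c_{a}$ occurs in $\Delta$, and an arbitrary fixed element of $B$ otherwise. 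The sets $\hat{\Delta}_{0}:=\{\Delta\in I:\Delta_{0}\subseteq\Delta\}$ have the finite intersection property, so they extend to an ultrafilter $\mathcal{U}$ on $I$. For each $\delta\in\Phi$, every finite $\Delta\supseteq\{\delta\}$ satisfies $\mathbf{B}\vDash\alpha_{\Delta}(\bar{b}^{\Delta})$, so in particular the assignment $c_{a}\mapsto g_{a}(\Delta)$ makes $\delta$ true in $\mathbf{B}$; since $\hat{\{\delta\}}\in\mathcal{U}$, {\L}o\'s's theorem gives $\mathbf{B}^{I}/\mathcal{U}\vDash\delta$ under the assignment $c_{a}\mapsto[g_{a}]_{\mathcal{U}}$. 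Hence this assignment models $\Phi$, and $a\mapsto[g_{a}]_{\mathcal{U}}$ is the desired homomorphism.

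The main obstacle is essentially the bookkeeping of the last paragraph: one must arrange the index set and the ultrafilter so that \emph{every} atomic constraint of $\Phi$ is eventually satisfied along $\mathcal{U}$, and apply {\L}o\'s's theorem to the correct expanded assignment $c_{a}\mapsto[g_{a}]_{\mathcal{U}}$. Once this is in place, the verification that the resulting map preserves operations and relations is routine. (An alternative to the explicit construction would be to use compactness to produce a homomorphism of $\mathbf{A}$ into some elementary extension $\mathbf{M}\succeq\mathbf{B}$, and then invoke Frayne's theorem to embed $\mathbf{M}$ elementarily into an ultrapower of $\mathbf{B}$; composing yields the same conclusion.)
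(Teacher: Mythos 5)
Your proposal is correct, but note that there is nothing internal to compare it against: the paper does not prove this lemma, it simply quotes it from Hodges (Theorem 6.5.7) and uses it as a black box. Both directions of your argument go through: $(2)\Rightarrow(1)$ is the routine preservation of atomic formulas under homomorphisms followed by {\L}o\'s's theorem, and your $(1)\Rightarrow(2)$ is a complete, self-contained version of the standard argument --- encode homomorphisms out of $\mathbf{A}$ as expansions modelling the positive atomic diagram $\Phi$, index by finite $\Delta\subseteq\Phi$, use hypothesis (1) to pick witnesses $\bar{b}^{\Delta}$ in $B$ for the p.p.\ sentence $\exists\bar{y}\,\alpha_{\Delta}(\bar{y})$, and take an ultrafilter containing all the sets $\hat{\Delta}_{0}$. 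The only step you leave tacit is the identification of the $\mathcal{L}_{A}$-expansion of $\mathbf{B}^{I}/\mathcal{U}$ by the classes $[g_{a}]_{\mathcal{U}}$ with the ultraproduct of the factorwise expansions $c_{a}\mapsto g_{a}(\Delta)$, which is what licenses the application of {\L}o\'s's theorem to $\mathcal{L}_{A}$-sentences; this is standard and harmless. Your parenthetical alternative (compactness to obtain a homomorphism into an elementary extension of $\mathbf{B}$, then Frayne's theorem to pass to an ultrapower) is essentially the route taken in Hodges; your explicit construction trades that for a direct argument requiring only {\L}o\'s's theorem, which arguably makes it the more elementary of the two.
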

Let $\mathcal{K}$ be a class of $\mathcal{L}$-structures. We say
that the $\mathcal{L}$-formula $\varphi(x_{1},\dots x_{n},y_{1},\dots,y_{m})$
\emph{defines a function} in $\mathcal{K}$ if 
\[
\mathcal{K}\vDash\forall\bar{x},\bar{y},\bar{z}\ \varphi\left(\bar{x},\bar{y}\right)\wedge\varphi\left(\bar{x},\bar{z}\right)\rightarrow\bigwedge_{j=1}^{m}y_{j}=z_{j}.
\]
In that case, for each $\mathbf{A}\in\mathcal{K}$ we write $[\varphi]^{\mathbf{A}}$
to denote the $n$-ary partial function defined by $\varphi$ in $\mathbf{A}$.

If $X$ is a set disjoint with $\mathcal{L}$, we write $\mathcal{L}_{X}$
to denote the language obtained by adding the elements in $X$ as
new constant symbols to $\mathcal{L}$. If $\mathbf{B}$ is an $\mathcal{L}$-structure
and $A$ is a subset of $B$, let $\mathbf{B}_{A}$ be the expansion
of $\mathbf{B}$ to $\mathcal{L}_{A}$ where each new constant names
itself. If $\mathcal{L}\subseteq\mathcal{L}^{+}$ and $\mathbf{A}$
is an $\mathcal{L}^{+}$-model, let $\mathbf{A}|_{\mathcal{L}}$ denote
the reduct of $\mathbf{A}$ to $\mathcal{L}$.

Next we present the main result of this article.
\begin{thm}
\label{epic sii pp definable}Let $\mathcal{K}$ be a class closed
under ultraproducts and $\mathbf{A}\leq\mathbf{B}$ structures. T.f.a.e.:
\begin{enumerate}
\item $A$ is an epic subalgebra of $\mathbf{B}$ in $\mathcal{K}$.
\item For every $b\in B$ there are a primitive positive formula $\varphi\left(\bar{x},y\right)$
and $\bar{a}$ from $A$ such that:

\begin{enumerate}
\item $\varphi\left(\bar{x},y\right)$ defines a function in $\mathcal{K}$
\item $[\varphi]^{\mathbf{B}}(\bar{a})=b$. 
\end{enumerate}
\end{enumerate}
\end{thm}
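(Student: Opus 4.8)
The plan is to prove the two implications separately, with the substantive work in $(1)\Rightarrow(2)$. The implication $(2)\Rightarrow(1)$ is routine: suppose $g,g'\colon\mathbf B\to\mathbf C$ with $\mathbf C\in\mathcal K$ and $g|_A=g'|_A$, and fix $b\in B$. Take $\varphi(\bar x,y)$ and $\bar a$ from $A$ as in (2), so $\mathbf B\vDash\varphi(\bar a,b)$. Since primitive positive formulas are preserved by homomorphisms, $\mathbf C\vDash\varphi(g\bar a,gb)$ and $\mathbf C\vDash\varphi(g'\bar a,g'b)$; as $g\bar a=g'\bar a$ and $\varphi$ defines a function in $\mathcal K\ni\mathbf C$, we conclude $gb=g'b$. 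Since $b$ was arbitrary, $g=g'$, so $\mathbf A\leq_e\mathbf B$ in $\mathcal K$.

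For $(1)\Rightarrow(2)$ I would argue by contraposition. Suppose some $b\in B$ admits no primitive positive $\varphi(\bar x,y)$ defining a function in $\mathcal K$ with $[\varphi]^{\mathbf B}(\bar a)=b$ for a tuple $\bar a$ from $A$. The goal is to produce $\mathbf C\in\mathcal K$ and homomorphisms $g,g'\colon\mathbf B\to\mathbf C$ with $g|_A=g'|_A$ but $gb\neq g'b$, contradicting $\mathbf A\leq_e\mathbf B$. I would encode this target as a ``two copies of $\mathbf B$ glued over $A$'' diagram: in the language $\mathcal L$ augmented by constants $\{c_u,c'_u:u\in B\}$, let $\Delta$ consist of $\varphi(\bar c_{\bar u})$ and $\varphi(\bar c'_{\bar u})$ for every atomic $\varphi$ with $\mathbf B\vDash\varphi(\bar u)$, together with the identifications $c_a=c'_a$ for $a\in A$. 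A model of $\Delta$ in which $c_b\neq c'_b$ yields, by the homomorphism/diagram argument behind Lemma \ref{toda pp implica homo}, two homomorphisms $g,g'$ out of $\mathbf B$ (read off from the $c$- and $c'$-constants) that agree on $A$ and separate $b$.

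The main obstacle is that the single condition we actually need, $c_b\neq c'_b$, is not primitive positive, so it cannot be carried through a homomorphism/ultraproduct argument directly; this is exactly where the failure of (2) is consumed, and it forces a finite-fragment construction rather than a one-shot compactness argument. Given a finite $s\subseteq\Delta$, gather its primitive positive content into one formula: there are $\bar a$ from $A$, a tuple $\bar w$ from $B$ with $b$ among its entries, and a primitive positive $\Theta(\bar x,y,\bar z)$ with $\mathbf B\vDash\Theta(\bar a,b,\bar w)$ realizing $s$, the variable $y$ marking the $b$-coordinate. Set $\varphi_s(\bar x,y):=\exists\bar z\,\Theta(\bar x,y,\bar z)$, so $\mathbf B\vDash\varphi_s(\bar a,b)$. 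Since the definition of epic substructure presupposes $\mathbf B\in\mathcal K$, any formula defining a function in $\mathcal K$ defines one in $\mathbf B$; hence if $\varphi_s$ defined a function in $\mathcal K$ we would get $[\varphi_s]^{\mathbf B}(\bar a)=b$, against our assumption on $b$. Thus $\varphi_s$ does not define a function in $\mathcal K$: there are $\mathbf C_s\in\mathcal K$, a tuple $\bar\alpha$, and $\delta\neq\delta'$ with $\mathbf C_s\vDash\varphi_s(\bar\alpha,\delta)\wedge\varphi_s(\bar\alpha,\delta')$. Interpreting in $\mathbf C_s$ the $c$-constants by the witnesses for $\delta$, the $c'$-constants by the witnesses for $\delta'$, and the shared $A$-constants by $\bar\alpha$, gives a model of $s\cup\{c_b\neq c'_b\}$ inside $\mathcal K$.

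Finally I would take the ultraproduct of the $\mathbf C_s$ over the directed set of finite fragments, along an ultrafilter containing every cone $\{s:s\supseteq s_0\}$. By the theorem of \L{}o\'{s}, each sentence of $\Delta$ lies in all sufficiently large fragments and the single disequation $c_b\neq c'_b$ holds in every factor, so the ultraproduct $\mathbf C^{*}$ satisfies all of $\Delta$ together with $c_b\neq c'_b$; and $\mathbf C^{*}\in\mathbb P_u(\mathcal K)=\mathcal K$ by closure under ultraproducts. Reading off the constants yields $g,g'\colon\mathbf B\to\mathbf C^{*}$ with $g|_A=g'|_A$ and $gb\neq g'b$, contradicting (1). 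I expect the hardest point to be precisely this smuggling of the non--primitive-positive disequation $c_b\neq c'_b$ through an otherwise entirely positive (hence homomorphism-friendly) construction, which is what dictates both the two-copies-over-$A$ framing and the finite-fragment ultraproduct in place of ordinary compactness.
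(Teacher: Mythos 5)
Your proposal is correct, but your argument for $(1)\Rightarrow(2)$ takes a genuinely different route from the paper's (your $(2)\Rightarrow(1)$ coincides with the paper's). The paper argues directly: it forms the set $\Sigma(x)$ of all p.p.\ $\mathcal{L}_{A}$-formulas satisfied by $b$ in $\mathbf{B}_{A}$, shows that two realizations $c,d$ of $\Sigma$ in any expansion of a member of $\mathcal{K}$ must be equal---this is where epicity is consumed, via Lemma \ref{toda pp implica homo} (homomorphisms from $\mathbf{B}_{A}$ into ultrapowers) followed by the elementary amalgamation theorem, which merges the two resulting homomorphisms into a common codomain---and then extracts a single formula by ordinary compactness, after first replacing $\mathcal{K}$ by an axiomatizable class. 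You argue contrapositively: from the failure of (2) at $b$ you get, for each finite fragment $s$ of the doubled positive diagram glued over $A$, a witness $\mathbf{C}_{s}\in\mathcal{K}$ where the associated p.p.\ formula $\varphi_{s}$ takes two values on one argument, and an ultraproduct over fragments assembles these into a single $\mathbf{C}^{*}$ with two homomorphisms agreeing on $A$ and separating $b$. The trade-off is instructive: where the paper needs amalgamation to put two maps into one target, you get this for free because the non-functionality of $\varphi_{s}$ already places both values $\delta\neq\delta'$ inside the single structure $\mathbf{C}_{s}$; and where the paper needs axiomatizability to invoke compactness, your fragment-by-fragment ultraproduct uses the hypothesis $\mathbb{P}_{u}(\mathcal{K})\subseteq\mathcal{K}$ directly, making the proof more self-contained (only \L{}o\'{s}'s theorem and the diagram lemma) and making transparent why ultraproduct-closure is exactly the hypothesis needed---compare the paper's example showing that $(1)\Rightarrow(2)$ fails without it.

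Two small points you should make explicit. First, $\neg(2)$ automatically forces $b\notin A$ (otherwise $x_{1}=y$ witnesses (2)); your interpretation of the constants tacitly needs this, since for $b\in A$ the identification $c_{b}=c'_{b}$ would lie in $\Delta$ and clash with $c_{b}\neq c'_{b}$. Second, in forming $\Theta$ you should take the conjunction of the translations of all atomic sentences occurring in \emph{either} copy within $s$ (i.e., symmetrize $s$), using one variable per element of $A$, the variable $y$ for $b$ in both copies, and fresh $\bar{z}$-variables otherwise; with that reading, a single formula serves both copies, the common tuple $\bar{\alpha}$ can interpret the shared $A$-constants, and your interpretation of $s\cup\{c_{b}\neq c'_{b}\}$ in $\mathbf{C}_{s}$ goes through verbatim.
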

\begin{proof}
(1)$\Rightarrow$(2). We can assume that $\mathcal{K}$ is axiomatizable
(replacing $\mathcal{K}$ by $\mathbb{I}\mathbb{S}(\mathcal{K})$
if necessary). Suppose $\mathbf{A}\leq_{e}\mathbf{B}$ in $\mathcal{K}$
and let $b\in B$. Define
\[
\Sigma\left(x\right):=\{\varphi\left(x\right)\mid\varphi\left(x\right)\text{ is a p.p.\ formula of }\mathcal{L}_{A}\text{ and }\mathbf{B}_{A}\vDash\varphi\left(b\right)\}\text{,}
\]
Let $c,d$ be two new constant symbols and take
\[
\mathcal{K}^{*}:=\{\mathbf{M}\mid\mathbf{M}\text{ is a }\mathcal{L}_{A}\cup\{c,d\}\text{-model and }\mathbf{M}|_{\mathcal{L}}\in\mathcal{K}\}\text{.}
\]
Let $\mathbf{C}$ be a model of $\mathcal{K}^{*}$ such that $\mathbf{C}\vDash\Sigma(c)\cup\Sigma(d)$.
By Lemma \ref{toda pp implica homo}, there are elementary extensions
$\mathbf{E},\mathbf{E}^{\prime}$ of $\mathbf{C}$. and homomorphisms
\begin{align*}
h & :\mathbf{B}_{A}\rightarrow\mathbf{E}|_{\mathcal{L}_{A}}\\
h^{\prime} & :\mathbf{B}_{A}\rightarrow\mathbf{E}^{\prime}|_{\mathcal{L}_{A}}
\end{align*}
such that $h(b)=c^{\mathbf{C}}$ and $h^{\prime}(b)=d^{\mathbf{C}}$.
The elementary amalgamation theorem \cite[Thm. 6.4.1]{Hodges1993}
provides us with an algebra $\mathbf{D}$ and elementary embeddings
$g:\mathbf{E}\rightarrow\mathbf{D}$, $g^{\prime}:\mathbf{E}^{\prime}\rightarrow\mathbf{D}$
such that $g$ and $g^{\prime}$ agree on $C$. Next, observe that
\begin{align*}
gh & :\mathbf{B}\rightarrow\mathbf{D}|_{\mathcal{L}}\\
g^{\prime}h^{\prime} & :\mathbf{B}\rightarrow\mathbf{D}|_{\mathcal{L}}
\end{align*}
are homomorphisms that agree on $A$, and since $\mathbf{D}|_{\mathcal{L}}\in\mathcal{K}$
we must have
\[
gh=g^{\prime}h^{\prime}\text{.}
\]
In particular $gh(b)=g^{\prime}h^{\prime}(b)$, which is $g(c^{\mathbf{C}})=g^{\prime}(d^{\mathbf{C}})$.
So, as $g$ is 1-1, and $g$ and $g^{\prime}$ are the same on $C$
we have $c^{\mathbf{C}}=d^{\mathbf{C}}$.

Thus we have shown
\[
\mathcal{K}^{*}\vDash{\textstyle \bigwedge}\left(\Sigma\left(c\right)\cup\Sigma\left(d\right)\right)\rightarrow c=d\text{.}
\]
By compactness (and using that the conjunction of p.p.\ formulas
is equivalent to a p.p.\ formula), there is single p.p.\ $\mathcal{L}$-formula
$\varphi\left(\bar{x},y\right)$ such that
\[
\mathcal{K}^{*}\vDash\varphi(\bar{a},c)\wedge\varphi(\bar{a},d)\rightarrow c=d\text{,}
\]
and hence
\[
\mathcal{K}\vDash\forall\bar{x},y,z\ \varphi(\bar{x},z)\wedge\varphi(\bar{x},z)\rightarrow y=z\text{.}
\]
This completes the proof of (1)$\Rightarrow$(2).

(2)$\Rightarrow$(1). Suppose (2) holds for $\mathbf{A}$, $\mathbf{B}$
and $\mathcal{K}$. Let $\mathbf{C}\in\mathcal{K}$ and $h,h^{\prime}:\mathbf{B}\rightarrow\mathbf{C}$
homomorphisms agreeing on $A$. Fix $b\in B$. There are a p.p.\ formula
$\varphi\left(\bar{x},y\right)$ and $\bar{a}$ elements from $A$
such that
\begin{align*}
\mathbf{B} & \vDash\varphi(\bar{a},b)\\
\mathcal{K} & \vDash\forall\bar{x},y,z\ \varphi(\bar{x},y)\wedge\varphi(\bar{x},z)\rightarrow y=z\text{.}
\end{align*}
Hence
\[
\mathbf{C}\vDash\varphi(h\bar{a},hb)\wedge\varphi(h^{\prime}\bar{a},h^{\prime}b)\text{,}
\]
and as $h\bar{a}=h^{\prime}\bar{a}$ we have $hb=h^{\prime}b$.
\end{proof}
It is worth noting that (2)$\Rightarrow$(1) in Theorem \ref{epic sii pp definable}
always holds, i.e., it does not require for $\mathcal{K}$ to be closed
under ultraproducts. On the other hand, as the upcoming example shows,
the implication (1)$\Rightarrow$(2) may fail if $\mathcal{K}$ is
not closed under ultraproducts.
\begin{example}
Let $\mathcal{L}=\{s,0\}$ where $s$ is a binary function symbol
and $0$ a constant. Let $\mathbf{B}$ be the $\mathcal{L}$-structure
with universe $\omega\cup\{\omega\}$ such that $0^{\mathbf{B}}=0$
and 
\[
s^{\mathbf{B}}(a,b)=\begin{cases}
0 & \mbox{if }b=a+1,\\
1 & \mbox{otherwise.}
\end{cases}
\]
Take $\mathbf{A}$ the subalgebra of $\mathbf{B}$ with universe $\omega$.
It is easy to see that the identity is the only endomorphism of $\mathbf{B}$.
Thus, in particular, we have that $\mathbf{A}\leq_{e}\mathbf{B}$
in $\{\mathbf{B}\}$. We prove next that there is no p.p.\ formula
with parameters from $A$ defining $\omega$ in $\mathbf{B}$. Take
$\mathcal{L}^{+}:=\mathcal{L}_{B}\cup\{\omega'\}$, where $\omega'$
is a new constant, and let $\Gamma$ be the $\mathcal{L}^{+}$-theory
obtained by adding to the elementary diagram of $\mathbf{B}$ the
following sentences: 
\[
\{s(n,\omega')=1\mid n\in\omega\}\cup\{s(\omega',n)=1\mid n\in\omega\}\cup\{\omega\neq\omega'\}.
\]
It is a routine task to show that $\Gamma$ is consistent. Fix a model
$\mathbf{C}$ of $\Gamma$ and define $h,h':\mathbf{B}\rightarrow\mathbf{C}$
by $h(n)=h'(n)=n^{\mathbf{C}}$ for all $n\in\omega$, $h(\omega)=\omega^{\mathbf{C}}$
and $h'(\omega)=\omega'^{\mathbf{C}}$. Again, it is easy to see that
$h$ and $h'$ are homomorphisms from $\mathbf{B}$ to $\mathbf{C}|_{\mathcal{L}}$.
Since they agree on $A$ and $h(\omega)\neq h'(\omega)$, we conclude
that there is no p.p. formula with parameters from $A$ defining $\omega$
in $\mathbf{B}$.
\end{example}

\subsection{The finite case}

When $\mathcal{K}$ is (up to isomorphisms) a finite set of finite
structures, we can sharpen Theorem \ref{epic sii pp definable}. In
this case it is possible to avoid the existential quantifiers in the
definable functions at the cost of adding parameters from $\mathbf{B}$.
\begin{thm}
\label{thm:epicas en clases finitas}Let $\mathcal{K}$ be (up to
isomorphisms) a finite set of finite structures, and let $\mathbf{A}\leq\mathbf{B}$
be finite. T.f.a.e.:
\begin{enumerate}
\item $\mathbf{A}$ is an epic substructure of $\mathbf{B}$ in $\mathcal{K}$.
\item For every $b_{1}\in B$ there are a finite conjunction of atomic formulas
$\alpha(\bar{x},\bar{y})$, $a_{1},\dots,a_{n}\in A$ and $b_{2},\dots,b_{m}\in B$,
with $m\geq1$, such that

\begin{enumerate}
\item $\alpha(\bar{x},\bar{y})$ defines a function in $\mathcal{K}$
\item $[\alpha]^{\mathbf{B}}(\bar{a})=\bar{b}$.
\end{enumerate}
\item For every $b\in B$ there are a primitive positive formula $\varphi\left(\bar{x},y\right)$
and $\bar{a}$ from $A$ such that:

\begin{enumerate}
\item $\varphi\left(\bar{x},y\right)$ defines a function in $\mathcal{K}$
\item $[\varphi]^{\mathbf{B}}(\bar{a})=b$. 
\end{enumerate}
\end{enumerate}
\end{thm}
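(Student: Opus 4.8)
The plan is to establish the cycle of implications $(1)\Rightarrow(2)\Rightarrow(3)\Rightarrow(1)$. The last of these is essentially free: statement $(3)$ is word-for-word condition $(2)$ of Theorem \ref{epic sii pp definable}, and the implication $(2)\Rightarrow(1)$ of that theorem holds for every class $\mathcal K$ (as remarked right after its proof, it does not use closure under ultraproducts). So $(3)\Rightarrow(1)$ requires no further work. The implication $(2)\Rightarrow(3)$ is also routine and does not use finiteness: given a finite conjunction of atomic formulas $\alpha(\bar x,\bar y)$ with $\bar y=(y_1,\dots,y_m)$ that defines a function in $\mathcal K$ and satisfies $[\alpha]^{\mathbf B}(\bar a)=\bar b$ with $b_1$ the prescribed element, I would simply set
\[
\varphi(\bar x,y_1):=\exists y_2\cdots\exists y_m\ \alpha(\bar x,y_1,\dots,y_m).
\]
This is primitive positive, and it inherits functionality from $\alpha$: if $\varphi(\bar c,d)$ and $\varphi(\bar c,d')$ hold in some $\mathbf C\in\mathcal K$, then there are witness tuples completing $(\bar c,d)$ and $(\bar c,d')$ to realizations of $\alpha$, and since $\alpha$ defines a function these completed tuples coincide, forcing $d=d'$. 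Finally $\mathbf B\models\alpha(\bar a,\bar b)$ witnesses $[\varphi]^{\mathbf B}(\bar a)=b_1$.

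The real content is $(1)\Rightarrow(2)$, and here is where I would use finiteness, via the positive atomic diagram. Since $\mathcal K$ is a finite set of finite structures I may assume the signature is finite (only finitely many interpretations of symbols occur across $\{\mathbf B\}\cup\mathcal K$, so symbols with equal interpretations everywhere can be identified). Enumerate $A=\{a_1,\dots,a_n\}$ and $B=\{b_1,\dots,b_m\}$, with $b_1$ the given target, and take $\bar x=(x_1,\dots,x_n)$, $\bar y=(y_1,\dots,y_m)$. Let $\alpha(\bar x,\bar y)$ be the conjunction of all atomic formulas $\theta$ in these variables with $\mathbf B\models\theta(\bar a,\bar b)$; because the signature is finite this is a finite conjunction, recording the graphs of the fundamental operations on $B$, the fundamental relations holding on $B$, the values of the constants, and the equalities $y_i=x_j$ whenever $b_i=a_j$. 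By construction $\mathbf B\models\alpha(\bar a,\bar b)$, so it remains to prove that $\alpha$ defines a function in $\mathcal K$; then automatically $[\alpha]^{\mathbf B}(\bar a)=\bar b$.

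The key observation is that $\alpha$ encodes exactly ``being the image of $\mathbf B$ under a homomorphism.'' Concretely, I would show that whenever $\mathbf C\in\mathcal K$ and $\mathbf C\models\alpha(\bar c,\bar d)$, the assignment $g\colon b_i\mapsto d_i$ is a well-defined homomorphism $g\colon\mathbf B\to\mathbf C$ with $g(a_j)=c_j$ for all $j$: each conjunct of $\alpha$ that records an operation value, a relation, a constant, or an $A$-identification turns, under satisfaction in $\mathbf C$, into precisely the clause needed for $g$ to respect that operation, relation, constant, or to satisfy $g(a_j)=c_j$. Granting this, suppose $\mathbf C\models\alpha(\bar c,\bar d)\wedge\alpha(\bar c,\bar d')$; then $g\colon b_i\mapsto d_i$ and $g'\colon b_i\mapsto d'_i$ are homomorphisms $\mathbf B\to\mathbf C$ that agree on $A$ (both send $a_j$ to $c_j$), so $\mathbf A\leq_e\mathbf B$ forces $g=g'$, i.e.\ $\bar d=\bar d'$. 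Thus $\alpha$ defines a function, completing $(1)\Rightarrow(2)$. I expect the main obstacle to be the bookkeeping in this last verification---checking that the positive atomic diagram really captures the homomorphism condition in the forward direction (homomorphisms need only preserve atomic formulas positively, which is exactly what a conjunction of atomics transfers)---together with the preliminary reduction to a finite signature that keeps $\alpha$ a genuine finite conjunction.
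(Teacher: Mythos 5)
Your implications (2)$\Rightarrow$(3) and (3)$\Rightarrow$(1) coincide with the paper's, and the core of your (1)$\Rightarrow$(2) --- realizations of the positive atomic diagram of $(\bar a,\bar b)$ in a structure $\mathbf{C}\in\mathcal{K}$ give homomorphisms $\mathbf{B}\rightarrow\mathbf{C}$ agreeing on $A$, whence epicness yields functionality --- is exactly the paper's argument. The genuine gap is in how you make $\alpha$ finite. First, the reduction ``I may assume the signature is finite'' is invalid in general: identifying symbols with equal interpretations across $\{\mathbf{B}\}\cup\mathcal{K}$ can only merge symbols of the same arity, so a signature with symbols of unboundedly many arities (say one $k$-ary function symbol for each $k$) remains infinite after the identification; and you cannot instead discard symbols, because a map preserving only a sub-signature need not be an $\mathcal{L}$-homomorphism, which is what you must produce in order to invoke $\mathbf{A}\leq_{e}\mathbf{B}$ in $\mathcal{K}$. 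Second, even over a finite signature with a function symbol, ``the conjunction of all atomic formulas in these variables satisfied by $(\bar a,\bar b)$'' is an infinite conjunction, since terms nest to arbitrary depth; your parenthetical gloss (graphs of operations, relations, constants, equalities) describes the flat fragment, which is indeed finite and suffices for your verification, but it is not what ``all atomic formulas'' says, so the finiteness claim as written is false.

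The paper closes this hole with a different finiteness device, which is the one idea missing from your write-up: take $\Delta(\bar x,\bar y)$ to be the set of \emph{all} atomic formulas satisfied by $(\bar a,\bar b)$ in $\mathbf{B}$, with no restriction on the signature, and observe that since $\mathcal{K}$ is a finite set of finite structures, each formula in $n+m$ variables is determined, up to equivalence in $\mathcal{K}$, by the tuple of subsets it defines in the members of $\mathcal{K}$; there are only finitely many such tuples, so $\Delta$ has finitely many $\mathcal{K}$-equivalence classes, and a conjunction $\alpha$ of representatives satisfies $\mathcal{K}\vDash\alpha\leftrightarrow\bigwedge\Delta$. Any realization of $\alpha$ in $\mathbf{C}\in\mathcal{K}$ then realizes every formula of $\Delta$ --- in particular every flat atomic formula, for every symbol of the possibly infinite signature --- and your homomorphism argument goes through verbatim. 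If you replace the finite-signature reduction by this compression step (or explicitly restrict the theorem to signatures exhibiting only finitely many arities, where your identification argument does terminate), your proof becomes correct and is essentially the paper's.
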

\begin{proof}
(1)$\Rightarrow$(2). If $b_{1}\in A$ the formula $x_{1}=y_{1}$
does the job. Suppose $b_{1}\notin A$, and let $a_{1},\dots,a_{n}$
and $b_{1},\dots,b_{m}$ be enumerations of $A$ and $B\setminus A$
respectively. Let 
\[
\Delta(\bar{x},\bar{y}):=\{\delta(\bar{x},\bar{y})\mid\delta(\bar{x},\bar{y})\mbox{ is an atomic formula and }\mathbf{B}\vDash\delta(\bar{a},\bar{b})\}.
\]
Since $\mathcal{K}$ is a finite set of finite structures, there are
finitely many formulas in $\Delta(\bar{x},\bar{y})$ up to logical
equivalence in $\mathcal{K}$. Thus, there is a finite conjunction
of atomic formulas $\alpha(\bar{x},\bar{y})$ such that 
\[
\mathcal{K}\vDash\alpha(\bar{x},\bar{y})\leftrightarrow\bigwedge\Delta(\bar{x},\bar{y}).
\]
Take $\mathbf{C}\in\mathcal{K}$ and suppose $\mathbf{C}\vDash\alpha(\bar{c},\bar{d})\wedge\alpha(\bar{c},\bar{e})$.
Then the maps $h,h':\mathbf{B}\rightarrow\mathbf{C}$, given by $h:\bar{a},\bar{b}\mapsto\bar{c},\bar{d}$
and $h':\bar{a},\bar{b}\mapsto\bar{c},\bar{e}$, are homomorphisms.
Since $h$ and $h'$ agree on $A$, it follows that $h=h'$. Hence
$\bar{d}=\bar{e}$, and we have shown that $\alpha(\bar{x},\bar{y})$
defines a function in $\mathcal{K}$.

(2)$\Rightarrow$(3). The p.p. formulas in (3) can be obtained by
adding existential quantifiers to the formulas given by (2).

(3)$\Rightarrow$(1). This is the same as (2)$\Rightarrow$(1) in
Theorem \ref{epic sii pp definable}.
\end{proof}
Again, it is worth noting that implications (2)$\Rightarrow$(3)$\Rightarrow$(1)
hold for any $\mathbf{A}$, $\mathbf{B}$ and $\mathcal{K}$.

The example below shows that, in the general case, the existential
quantifiers in (2) of Theorem \ref{epic sii pp definable} are necessary.
\begin{example}
Let $\mathbf{B}$ be the Browerian algebra whose lattice reduct is
depicted in Figure \ref{fig:alg tom}, and let $\mathbf{A}$ be the
subalgebra of $\mathbf{B}$ with universe $\{a_{0},a_{1},\dots\}\cup\{\top\}$.
It is proved in \cite[Thm. 6.1]{BezhanishviliMoraschiniRaftery} that
$\mathbf{A}\leq_{e}\mathbf{B}$ in $\mathbb{V}(\mathbf{B})$. We show
that (2) in Theorem \ref{thm:epicas en clases finitas} does not hold
for $\mathbf{A}$, $\mathbf{B}$ and $\mathbb{V}(\mathbf{B})$. Towards
a contradiction fix $d_{1}\in B\setminus A$, and suppose there are
a conjunction of equations $\alpha(x_{1},\dots,x_{n},y_{1},\dots,y_{m})$,
$c_{1},\dots,c_{n}\in A$ and $d_{2},\dots,d_{m}\in B$ such that
\begin{itemize}
\item $\alpha(\bar{x},\bar{y})$ defines a function in $\mathbb{V}(\mathbf{B})$
\item $\mathbf{B}\vDash\alpha(\bar{c},\bar{d})$.
\end{itemize}
Let $\mathbf{C}$ and $\mathbf{D}$ be the subalgebras of $\mathbf{B}$
generated by $\bar{c}$ and $\bar{c},\bar{d}$ respectively. Note
that $\mathbf{D}$ is finite and $\mathbf{C}<\mathbf{D}$. Also note
that $\alpha(\bar{x},\bar{y})$ defines a function in $\mathbb{V}(\mathbf{D})$,
and $\mathbf{D}\vDash\alpha(\bar{c},\bar{d})$, because $\alpha$
is quantifier-free. So we have $\mathbf{C}<_{e}\mathbf{D}$ in $\mathbb{V}(\mathbf{D})$;
but this is not possible, as Corollary 5.5 in \cite{BezhanishviliMoraschiniRaftery}
implies that there are no proper epic subalgebras in finitely generated
varieties of Browerian algebras.

\begin{figure}
\includegraphics{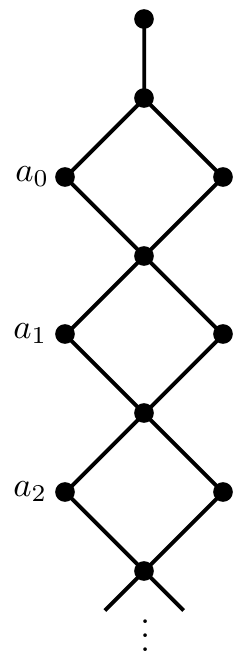}

\protect\caption{}

\label{fig:alg tom}
\end{figure}

\end{example}

\section{Checking for epic subalgebras in a subclass\label{sec:Checking-for-epic}}

In the current section all languages considered are algebraic, i.e.,
without relation symbols. Given a quasivariety $\mathcal{Q}$ it can
be a daunting task to determine whether \emph{$\mathcal{Q}$} has
surjective epimorphisms, or equivalently, no proper epic subalgebras.
In this section we prove two results that, under certain assumptions
on $\mathcal{Q}$, provide a (hopefully) more manageable class $\mathcal{S}\subseteq\mathcal{Q}$
such that $\mathcal{Q}$ has no proper epic subalgebras iff $\mathcal{S}$
has no proper epic subalgebras.

Our first result provides such a class $\mathcal{S}$ for quasivarieties
with a near-unanimity term. The second one for arithmetical varieties
whose class of finitely subdirectly irreducible members is universal.

\subsection{Quasivarieties with a near-unanimity term}

An $n$-ary term $t(x_{1},\dots,x_{n})$ is a \emph{near-unanimity}
term for the class $\mathcal{K}$ if $n\geq3$ and $\mathcal{K}$
satisfies the identities 
\[
t(x,\dots,x,y)=t(x,\dots,x,y,x)=\dots=t(y,x,\dots,x)=x.
\]
When $n=3$ the term $t$ is called a \emph{majority} term for $\mathcal{K}$.
In every structure with a lattice reduct the term $(x\vee y)\wedge(x\vee z)\wedge(y\vee z)$
is a majority term. This example is specially relevant since many
classes of structures arising from logic algebrizations have lattice
reducts.

For the sake of the exposition the results are presented for quasivarieties
with a majority term. They are easily generalized to quasivarieties
with an arbitrary near-unanimity term.

For functions $f:A\rightarrow A'$ and $g:B\rightarrow B'$ let $f\times g:A\times B\rightarrow A'\times B'$
be defined by $f\times g(a,b):=(f(a),g(b))$.
\begin{thm}[\cite{BP_para_clases}]
\label{thm:BP}Let $\mathcal{K}$ be a class of structures with a
majority term and suppose $\varphi(\bar{x},y)$ defines a function
in $\mathcal{K}$. T.f.a.e.:
\begin{enumerate}
\item There is a term $t(\bar{x})$ such that $\mathcal{K}\vDash\forall\bar{x},y\ \varphi(\bar{x},y)\rightarrow y=t(\bar{x}).$
\item For all $\mathbf{A},\mathbf{B}\in\mathbb{P}_{u}(\mathcal{K})$, all
$\mathbf{S}\leq\mathbf{A}\times\mathbf{B}$ and all $s_{1},\dots,s_{n}\in S$
such that $[\varphi]^{\mathbf{A}}\times[\varphi]^{\mathbf{B}}(\bar{s})$
is defined, we have that $[\varphi]^{\mathbf{A}}\times[\varphi]^{\mathbf{B}}(\bar{s})\in S$.
\end{enumerate}
\end{thm}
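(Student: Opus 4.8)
The plan is to prove the two implications separately, with the real work in (2)$\Rightarrow$(1). Throughout I take $\varphi$ to be primitive positive, as in the intended applications: this guarantees that single-valuedness passes to substructures and that satisfaction of $\varphi$ is preserved upward along inclusions, facts I use freely below. For (1)$\Rightarrow$(2), first observe that the sentence in (1) is equivalent to a universal Horn sentence: writing $\varphi=\exists\bar w\,\alpha$, it becomes $\forall\bar x,y,\bar w\,(\alpha(\bar x,y,\bar w)\to y=t(\bar x))$, hence holds throughout $\mathbb{P}_u(\mathcal K)$. Thus for $\mathbf A,\mathbf B\in\mathbb P_u(\mathcal K)$ the partial maps $[\varphi]^{\mathbf A},[\varphi]^{\mathbf B}$ agree with $t^{\mathbf A},t^{\mathbf B}$ wherever defined. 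Given $\mathbf S\le\mathbf A\times\mathbf B$ and $\bar s\in S^n$ with $[\varphi]^{\mathbf A}\times[\varphi]^{\mathbf B}(\bar s)$ defined, this value equals $t^{\mathbf A\times\mathbf B}(\bar s)$, which lies in $S$ because $S$ is a subuniverse.

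For (2)$\Rightarrow$(1) I would proceed in three steps. Step 1 (interpolation on pairs): given $\mathbf A,\mathbf B\in\mathbb P_u(\mathcal K)$ and tuples $\bar a\in A^n,\bar a'\in B^n$ on which $[\varphi]$ is defined, put $\mathbf S:=\mathrm{Sg}^{\mathbf A\times\mathbf B}(\{(a_i,a_i'):i\le n\})$. Since the signature is algebraic, $S=\{(w^{\mathbf A}(\bar a),w^{\mathbf B}(\bar a')):w\text{ a term}\}$, and the tuple $\bar s=((a_1,a_1'),\dots,(a_n,a_n'))$ satisfies $[\varphi]^{\mathbf A}\times[\varphi]^{\mathbf B}(\bar s)=([\varphi]^{\mathbf A}(\bar a),[\varphi]^{\mathbf B}(\bar a'))$. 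By (2) this pair lies in $S$, so a single term $w$ interpolates $[\varphi]$ at $\bar a$ and at $\bar a'$ simultaneously. The special case $\mathbf A=\mathbf B$, $\bar a=\bar a'$ gives the local statement $[\varphi]^{\mathbf A}(\bar a)\in\mathrm{Sg}^{\mathbf A}(\bar a)$, which moreover descends to every $\mathbf M\in\mathbb{ISP}_u(\mathcal K)$.

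Step 2 (a finite disjunction, by compactness): recall that $\mathbb{ISP}_u(\mathcal K)=\mathrm{Mod}(\mathrm{Th}_\forall(\mathcal K))$. Adding new constants $\bar c,d$, consider the theory $\mathrm{Th}_\forall(\mathcal K)\cup\{\varphi(\bar c,d)\}\cup\{d\ne t(\bar c):t\text{ a term}\}$. A model of it would be a member of $\mathbb{ISP}_u(\mathcal K)$ in which $[\varphi](\bar c)=d\notin\mathrm{Sg}(\bar c)$, contradicting Step 1. Hence the theory is inconsistent, and compactness produces terms $t_1,\dots,t_k$ with $\mathcal K\vDash\varphi(\bar x,y)\to\bigvee_{j=1}^k y=t_j(\bar x)$. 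Here the device of pinning everything to the constants $\bar c,d$ is essential: it forces the finitely many inequalities to fail at a single point, which a naive ultraproduct of witnesses would not achieve.

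Step 3 (collapsing the disjunction with the majority term) is the crux. One upgrades the pairwise interpolation of Step 1 to a single global term using $M$: given terms interpolating $[\varphi]$ on finite sets of evaluation points, one merges three of them---each omitting a single point---via $M$, so that at every point at least two of the three agree with $[\varphi]$ and the majority term returns the correct value; a final compactness argument then converts this into one term $t$ valid throughout $\mathcal K$, yielding (1). This gluing is precisely the Baker--Pixley mechanism, and it is the only place a near-unanimity term enters: without it one is stuck with the disjunction of Step 2. I expect Step 3 to be the main obstacle, the delicate points being to organise the induction on evaluation points so that the majority term always sees a genuine majority, and to make the passage from finite interpolation to a single class-wide term precise.
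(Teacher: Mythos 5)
A preliminary remark: the paper does not prove this theorem at all --- it is quoted from \cite{BP_para_clases} --- so there is no internal proof to compare against, and your proposal has to stand on its own merits. Your proof of (1)$\Rightarrow$(2) is correct, and so are Steps 1 and 2 of (2)$\Rightarrow$(1): identifying $\mathrm{Sg}^{\mathbf{A}\times\mathbf{B}}$ of the pairs with simultaneous term values, descending the diagonal case to $\mathbb{ISP}_{u}(\mathcal{K})=Mod(\mathrm{Th}_{\forall}(\mathcal{K}))$, and the compactness argument producing $t_{1},\dots,t_{k}$ with $\mathcal{K}\vDash\varphi(\bar{x},y)\rightarrow\bigvee_{j}y=t_{j}(\bar{x})$ are all sound.

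The genuine gap is Step 3, and it is not just an omitted computation: the order of operations you propose (majority-gluing over finite sets of evaluation points first, ``a final compactness argument'' last) cannot be completed as described. Once you know that every finite set of points admits an interpolating term, the candidate interpolants range over an infinite set of terms, and the concluding compactness argument would need a consistent theory asserting, in a \emph{single} model of $\mathrm{Th}_{\forall}(\mathcal{K})$, a failure point for each candidate term; but failure witnesses for different terms live in different models, and models of a universal theory cannot be combined (universal sentences are not preserved by products), so that theory's consistency is exactly what you cannot establish. The only single-model version of this argument is the theory you already used in Step 2, whose inconsistency yields the finite disjunction and never a single term. The workable organization inverts your order: apply compactness \emph{first}, to two generic evaluation points --- the theory $\mathrm{Th}_{\forall}(\mathcal{K})\cup\{\varphi(\bar{c}_{1},d_{1}),\varphi(\bar{c}_{2},d_{2})\}\cup\{\neg(d_{1}=w(\bar{c}_{1})\wedge d_{2}=w(\bar{c}_{2}))\mid w\mbox{ a term}\}$ is inconsistent by your Step 1 argument, where condition (2) with $\mathbf{A}\neq\mathbf{B}$ is now essential, and an ultraproduct argument handles pairs of points lying in different models. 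This yields finitely many terms $s_{1},\dots,s_{r}$ such that every pair of points is simultaneously interpolated by some $s_{i}$ (note this subsumes your Step 2 as the diagonal case). Only then does the majority term enter, purely combinatorially: assign to each point $p$ the type $\sigma(p)=\{i\mid s_{i}\mbox{ interpolates at }p\}$; these are nonempty, pairwise intersecting, and only finitely many are realized, and an induction on the number of realized types --- gluing with $M$ three terms obtained inductively, each handling all but one type --- produces one term correct at every point, which is (1). So your instinct that Step 3 is the crux is right, but the sketch as written (gluing first, compactness last) points down a dead end; the compactness and the gluing must be interleaved in the opposite order.
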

An algebra $\mathbf{A}$ in the quasivariety $\mathcal{Q}$ is \emph{relatively
subdirectly irreducible} provided its diagonal congruence is completely
meet irreducible in the lattice of $\mathcal{Q}$-congruences of $\mathbf{A}$.
We write $\mathcal{Q}_{RSI}$ to denote the class of relatively subdirectly
irreducible members of $\mathcal{Q}$. For a class $\mathcal{K}$
let $\mathcal{K}\times\mathcal{K}:=\{\mathbf{A}\times\mathbf{B}\mid\mathbf{A},\mathbf{B}\in\mathcal{K}\}$.
\begin{thm}
\label{thm:Testigos para Q con M}Let $\mathcal{Q}$ be a quasivariety
with a majority term and let $\mathcal{S}=\mathbb{P}_{u}(\mathcal{Q}_{RSI})$.
T.f.a.e.:
\begin{enumerate}
\item $\mathcal{Q}$ has surjective epimorphisms.
\item For all $\mathbf{A},\mathbf{B}\in\mathcal{Q}$ we have that $\mathbf{A}\leq_{e}\mathbf{B}$
in $\mathcal{Q}$ implies $\mathbf{A}=\mathbf{B}$.
\item For all $\mathbf{A},\mathbf{B}\in\mathbb{S}(\mathcal{S}\times\mathcal{S})$
we have that $\mathbf{A}\leq_{e}\mathbf{B}$ in $\mathcal{S}\times\mathcal{S}$
implies $\mathbf{A}=\mathbf{B}$.
\item $\mathbb{S}(\mathcal{S}\times\mathcal{S})$ has surjective epimorphisms.
\end{enumerate}
\end{thm}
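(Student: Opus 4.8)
The plan is to treat the four conditions as two cheap equivalences plus one substantial reduction. The equivalence (1)$\Leftrightarrow$(2) is exactly the quasivariety criterion already recorded (no proper epic substructures $=$ surjective epimorphisms). The first thing I would establish is a simplification that makes (3) and (4) transparent: since $\mathcal{S}\times\mathcal{S}\subseteq\mathcal{Q}$ gives $\mathbb{ISP}(\mathcal{S}\times\mathcal{S})\subseteq\mathcal{Q}$, and conversely each $\mathbf{R}\in\mathcal{Q}_{RSI}\subseteq\mathcal{S}$ embeds diagonally into $\mathbf{R}\times\mathbf{R}\in\mathcal{S}\times\mathcal{S}$, one gets $\mathbb{ISP}(\mathcal{S}\times\mathcal{S})=\mathbb{ISP}(\mathcal{Q}_{RSI})=\mathcal{Q}$. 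Because being an epic substructure depends only on the $\mathbb{ISP}$-closure of the reference class, ``$\mathbf{A}\leq_{e}\mathbf{B}$ in $\mathcal{S}\times\mathcal{S}$'' is literally the same as ``$\mathbf{A}\leq_{e}\mathbf{B}$ in $\mathcal{Q}$''. With this identification, (3) says precisely that no proper epic pair of $\mathcal{Q}$ lies inside $\mathbb{S}(\mathcal{S}\times\mathcal{S})$, and (3)$\Leftrightarrow$(4) is then the epimorphism/epic-substructure correspondence applied to the $\mathbb{S}$-closed class $\mathbb{S}(\mathcal{S}\times\mathcal{S})$.

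Once (3) is read this way, the implication (2)$\Rightarrow$(3) is immediate from $\mathbb{S}(\mathcal{S}\times\mathcal{S})\subseteq\mathcal{Q}$. The whole content is (3)$\Rightarrow$(2), which I would prove contrapositively: starting from a proper epic substructure $\mathbf{A}<_{e}\mathbf{B}$ in $\mathcal{Q}$, I must manufacture one whose two algebras sit inside $\mathbb{S}(\mathcal{S}\times\mathcal{S})$. First embed $\mathbf{B}$ into a product $\prod_{i\in I}\mathbf{R}_{i}$ with each $\mathbf{R}_{i}\in\mathcal{Q}_{RSI}$ (relative subdirect representation), and fix $b\in B\setminus A$. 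Here the majority term enters through the Baker--Pixley theorem: a subalgebra of a product over a class with a majority term is cut out by its projections to pairs of coordinates, so $b\notin A$ forces a pair $i_{0},j_{0}$ for which the joint projection $\rho=\pi_{\{i_{0},j_{0}\}}\colon\prod_{i}\mathbf{R}_{i}\to\mathbf{R}_{i_{0}}\times\mathbf{R}_{j_{0}}$ satisfies $\rho(b)\notin\rho(A)$.

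Setting $\mathbf{A}^{*}=\rho(\mathbf{A})$ and $\mathbf{B}^{*}=\rho(\mathbf{B})$, both are subalgebras of $\mathbf{R}_{i_{0}}\times\mathbf{R}_{j_{0}}$ and hence lie in $\mathbb{S}(\mathcal{S}\times\mathcal{S})$; they are distinct because $\rho(b)\in B^{*}\setminus A^{*}$. Finally I would invoke the fact that epic substructures are preserved by homomorphic images (applied to $\rho|_{\mathbf{B}}$, whose images $\mathbf{A}^{*},\mathbf{B}^{*}$ lie in $\mathcal{Q}$) to conclude $\mathbf{A}^{*}<_{e}\mathbf{B}^{*}$ in $\mathcal{Q}$, i.e.\ in $\mathcal{S}\times\mathcal{S}$, contradicting (3). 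I expect the single genuine difficulty to be the Baker--Pixley step: it is the only place the majority hypothesis is used, and it is exactly what guarantees that a \emph{single pair} of relatively subdirectly irreducible factors already detects the failure of surjectivity. One could instead route (3)$\Rightarrow$(2) through Theorem \ref{epic sii pp definable} and Theorem \ref{thm:BP}, extracting from $b\notin A$ a primitive positive formula that defines a function in $\mathcal{Q}$ but is not a term and a failure of its preservation in a binary product of $\mathcal{S}$-members; but that path forces one to track the existential witnesses of the formula and place them inside the smaller algebra, a bookkeeping the projection argument sidesteps entirely.
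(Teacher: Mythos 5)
Your reductions among the four conditions are fine: (1)$\Leftrightarrow$(2) and (3)$\Leftrightarrow$(4) are the epimorphism/epic-substructure correspondence, (2)$\Rightarrow$(3) is trivial from $\mathbb{S}(\mathcal{S}\times\mathcal{S})\subseteq\mathcal{Q}$, and your observation that $\mathbb{ISP}(\mathcal{S}\times\mathcal{S})=\mathcal{Q}$, so that ``$\leq_{e}$ in $\mathcal{S}\times\mathcal{S}$'' coincides with ``$\leq_{e}$ in $\mathcal{Q}$'', is correct and tidy. The gap is in the one step you yourself identify as carrying all the weight. The Baker--Pixley theorem says that, in the presence of a majority term, a subalgebra of a product of \emph{finitely many} factors is determined by its projections onto pairs of coordinates; it is simply false for infinite products, and a subdirect representation of $\mathbf{B}$ over $\mathcal{Q}_{RSI}$ will in general have infinitely many factors. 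Concretely, let $\mathbf{2}$ be the two-element lattice (which has the majority term $(x\vee y)\wedge(x\vee z)\wedge(y\vee z)$) and let $\mathbf{S}\leq\mathbf{2}^{\omega}$ be the sublattice of eventually constant sequences. For every pair of coordinates $i,j$ the projection of $\mathbf{S}$ onto $\mathbf{2}\times\mathbf{2}$ is all of $\mathbf{2}\times\mathbf{2}$, i.e.\ it agrees with the projection of the full product, yet $\mathbf{S}\neq\mathbf{2}^{\omega}$. So for $b\in 2^{\omega}\setminus S$ there is no pair $i_{0},j_{0}$ with $\rho_{\{i_{0},j_{0}\}}(b)\notin\rho_{\{i_{0},j_{0}\}}(S)$, and your claim that ``$b\notin A$ forces such a pair'' has a counterexample exactly at the point where you invoke it (your argument, as written, does not use epicity of $\mathbf{A}$ in $\mathbf{B}$ at this step, only the majority term).

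One might hope that the epicity hypothesis rescues the claim, i.e.\ that proper \emph{epic} subalgebras are always detected by some pair of coordinates; but proving that is essentially the whole theorem, not a lemma one can quote. This is exactly where the paper's proof does its real work: it converts $b$ into a primitive positive formula $\varphi(\bar{x},y)$ defining a function in $\mathcal{Q}$ (Theorem \ref{epic sii pp definable}), and then applies Theorem \ref{thm:BP}, whose hypothesis (2) quantifies over subalgebras of binary products of \emph{ultraproducts} of the class. The ultraproducts are precisely the compactness device that replaces the finiteness of the index set in classical Baker--Pixley; this is also why the theorem is stated with $\mathcal{S}=\mathbb{P}_{u}(\mathcal{Q}_{RSI})$ rather than $\mathcal{Q}_{RSI}$. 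Note that your projection argument never uses ultraproducts at all, so if it were correct it would prove the stronger statement with $\mathcal{S}=\mathcal{Q}_{RSI}$; that is a warning sign, since that strengthening is only available in the finitely generated case (Corollary \ref{cor:M+FG implica S(Q_RSI x Q_RSI) alcanza.}), where your argument does become viable because $\mathbf{B}$ embeds in a product that can be taken over finitely many pairwise-distinct factors only after the compactness supplied by finiteness. In short: the route you call a bookkeeping-free shortcut does not exist in general; the bookkeeping (p.p.\ witnesses plus ultraproducts) is the proof.
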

\begin{proof}
The equivalences (1)$\Leftrightarrow$(2) and (3)$\Leftrightarrow$(4)
are immediate, and (2) clearly implies (3). We prove (3)$\Rightarrow$(2).
Suppose $\mathbf{A}\leq_{e}\mathbf{B}$ in $\mathcal{Q}$ and let
$b\in B$. We shall see that $b\in A$. By Theorem \ref{epic sii pp definable}
there is a p.p.\ $\mathcal{L}$-formula $\varphi(\bar{x},y)$ defining
a function in $\mathcal{Q}$, and such that $[\varphi]^{\mathbf{B}}(\bar{a})=b$
for some $\bar{a}\in A^{n}$. Let 
\[
\Sigma:=\{\varepsilon\mid\varepsilon\mbox{ is a p.p.\ formula of }\mathcal{L}_{A}\mbox{ and }\mathbf{B}_{A}\vDash\varepsilon\},
\]
and define 
\[
\mathcal{K}:=\{\mathbf{C}\in Mod(\Sigma)\mid\mathbf{C}|_{\mathcal{L}}\in\mathcal{S}\}.
\]
Let $\psi(y):=\varphi(\bar{a},y)$, and note that $\psi(y)$ defines
a nullary function in $\mathcal{K}$. Note as well that $\exists y\,\psi(y)\in\Sigma$,
and hence $[\psi]^{K}$ is defined for every $K\in\mathcal{K}$ .
We aim to apply Theorem \ref{thm:BP} to $\mathcal{K}$ and $\psi(y)$.
To this end fix $\mathbf{C},\mathbf{D}\in\mathbb{P}_{u}(\mathcal{K})=\mathcal{K}$
and let $\mathbf{S}\leq\mathbf{C}\times\mathbf{D}$. Note that as
$\Sigma$ is a set of p.p. formulas we have $\mathbf{C}\times\mathbf{D}\vDash\Sigma$,
and thus by Lemma \ref{toda pp implica homo} there is an ultrapower
$\mathbf{E}$ of $\mathbf{C}\times\mathbf{D}$ and a homomorphism
$h:\mathbf{B}_{A}\rightarrow\mathbf{E}$. We have that $\mathbf{E}\in\mathbb{P}_{u}(\mathcal{K}\times\mathcal{K})\subseteq\mathbb{P}_{u}(\mathcal{K})\times\mathbb{P}_{u}(\mathcal{K})=\mathcal{K}\times\mathcal{K}$,
and so 
\[
\mathbf{E}|_{\mathcal{L}}\in\mathcal{K}|_{\mathcal{L}}\times\mathcal{K}|_{\mathcal{L}}\subseteq\mathcal{S}\times\mathcal{S}.
\]
Next observe that since $h(\mathbf{A})\leq_{e}h(\mathbf{B})$ in $\mathcal{Q}$,
and $h(\mathbf{A}),h(\mathbf{B})\leq\mathbf{E}|_{\mathcal{L}}$, by
(3) it follows that $h(A)=h(B)$. Also, as $\mathbf{S}$ is an $\mathcal{L}_{A}$-subalgebra
of $\mathbf{E}$, we have that 
\[
h(\mathbf{B}_{A})=h(\mathbf{A}_{A})\leq\mathbf{S}.
\]
The fact that $\mathbf{B}\vDash\psi(b)$ implies $\mathbf{E}\vDash\psi(hb)$,
and so $[\psi]^{\mathbf{E}}=hb\in S$. We know that $\{\mathbf{C},\mathbf{D},\mathbf{C}\times\mathbf{D}\}\vDash\exists y\,\psi(y)$;
furthermore, since $\psi$ is p.p., we have $[\psi]^{\mathbf{C}}\times[\psi]^{\mathbf{D}}=[\psi]^{\mathbf{C}\times\mathbf{D}}$.
Putting all this together 
\[
[\psi]^{\mathbf{C}}\times[\psi]^{\mathbf{D}}=[\psi]^{\mathbf{C}\times\mathbf{D}}=[\psi]^{\mathbf{E}}\in S.
\]
Thus, Theorem \ref{thm:BP} produces an $\mathcal{L}_{A}$-term $t$
such that 
\begin{equation}
\mathcal{K}\vDash\forall y\ \psi(y)\rightarrow y=t.\label{eq:k sat quasi}
\end{equation}
In particular, for all $\mathbf{C}\in\mathcal{Q}_{RSI}$ and all $c_{1},\dots,c_{n}\in\mathbf{C}$
such that $[\varphi]^{\mathbf{C}}(\bar{c})$ is defined, we have 
\[
[\varphi]^{\mathbf{C}}(\bar{c})=t^{\mathbf{C}}(\bar{c}).
\]
Next let $\{\mathbf{B}_{i}\mid i\in I\}\subseteq\mathcal{Q}_{RSI}$
such that $\mathbf{B}\leq\prod_{I}\mathbf{B}_{i}$ is a subdirect
product. For every $i\in I$ let $\mathbf{B}_{i}^{A}$ be the expansion
of $\mathbf{B}_{i}$ to $\mathcal{L}_{A}$ given by $a^{\mathbf{B}_{i}^{A}}=\pi_{i}(a)$,
where $\pi_{i}:\mathbf{B}\rightarrow\mathbf{B}_{i}$ is the projection
map. It is clear that 
\begin{equation}
\mathbf{B}_{A}\leq\prod_{I}\mathbf{B}_{i}^{A}.\label{eq:B_A es subprod}
\end{equation}
Now, each $\mathbf{B}_{i}^{A}$ is a homomorphic image of $\mathbf{B}_{A}$,
so $\mathbf{B}_{i}^{A}\vDash\Sigma$ and thus $\mathbf{B}_{i}^{A}\in\mathcal{K}$
for all $i\in I$. Since $\forall y\ \psi(y)\rightarrow y=t$ is (equivalent
to) a quasi-identity, from (\ref{eq:k sat quasi}) and (\ref{eq:B_A es subprod})
we have 
\[
\mathbf{B}_{A}\vDash\forall y\ \psi(y)\rightarrow y=t.
\]
Hence $b=t^{\mathbf{B}_{A}}\in A$, and the proof is finished.
\end{proof}
Observe that Theorem \ref{thm:Testigos para Q con M} holds for any
$\mathcal{S}\subseteq\mathcal{Q}$ closed under ultraproducts and
containing $\mathcal{Q}_{RSI}$.
\begin{cor}
\label{cor:M+FG implica S(Q_RSI x Q_RSI) alcanza.}Let $\mathcal{Q}$
be a finitely generated quasivariety with a majority term. T.f.a.e.:
\begin{enumerate}
\item $\mathcal{Q}$ has surjective epimorphisms.
\item $\mathcal{\mathbb{S}}(\mathcal{Q}_{RSI}\times\mathcal{Q}_{RSI})$
has surjective epimorphisms.
\end{enumerate}
\end{cor}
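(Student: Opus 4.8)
The plan is to obtain Corollary~\ref{cor:M+FG implica S(Q_RSI x Q_RSI) alcanza.} directly from Theorem~\ref{thm:Testigos para Q con M}, exploiting the observation recorded immediately after that theorem: its conclusion remains valid with $\mathcal{S}$ replaced by \emph{any} subclass of $\mathcal{Q}$ that is closed under ultraproducts and contains $\mathcal{Q}_{RSI}$. Thus it suffices to prove that, when $\mathcal{Q}$ is finitely generated, the class $\mathcal{Q}_{RSI}$ is \emph{itself} closed under ultraproducts; for then we may take $\mathcal{S}=\mathcal{Q}_{RSI}$, and the equivalence (1)$\Leftrightarrow$(4) of Theorem~\ref{thm:Testigos para Q con M} becomes precisely (1)$\Leftrightarrow$(2) of the corollary, since in that case $\mathbb{S}(\mathcal{S}\times\mathcal{S})=\mathbb{S}(\mathcal{Q}_{RSI}\times\mathcal{Q}_{RSI})$.

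To verify that $\mathcal{Q}_{RSI}$ is closed under ultraproducts, I would first write $\mathcal{Q}=\mathbb{Q}(\mathcal{F})$ for a finite set $\mathcal{F}$ of finite algebras. The key input is the standard structural fact from the theory of quasivarieties that the relatively subdirectly irreducible members of a generated quasivariety satisfy $\mathcal{Q}_{RSI}\subseteq\mathbb{ISP}_{u}(\mathcal{F})$. Since the members of $\mathcal{F}$ are finite and finite in number, any ultraproduct of a family drawn from $\mathcal{F}$ has bounded finite size; partitioning the index set according to isomorphism type and using that exactly one block lies in the ultrafilter, such an ultraproduct is isomorphic to a single member of $\mathcal{F}$. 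Hence $\mathbb{P}_{u}(\mathcal{F})=\mathbb{I}(\mathcal{F})$, and therefore $\mathbb{ISP}_{u}(\mathcal{F})=\mathbb{IS}(\mathcal{F})$. As there are, up to isomorphism, only finitely many subalgebras of members of $\mathcal{F}$, we conclude that $\mathcal{Q}_{RSI}$ is, up to isomorphism, a finite set of finite algebras.

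Finally, the same bounded-size/isomorphism-type argument shows that any class which is, up to isomorphism, a finite set of finite algebras is closed under ultraproducts: an ultraproduct of members of $\mathcal{Q}_{RSI}$ is isomorphic to one of the finitely many isomorphism types appearing in $\mathcal{Q}_{RSI}$, hence lies in $\mathcal{Q}_{RSI}$. This gives $\mathbb{P}_{u}(\mathcal{Q}_{RSI})\subseteq\mathbb{I}(\mathcal{Q}_{RSI})$, which is the required closure, and completes the argument. The main point requiring care is the containment $\mathcal{Q}_{RSI}\subseteq\mathbb{ISP}_{u}(\mathcal{F})$, which is precisely where finite generation of $\mathcal{Q}$ is used; once this is in hand, the remaining steps are routine facts about ultraproducts of finitely many finite structures and an appeal to the remark following Theorem~\ref{thm:Testigos para Q con M}.
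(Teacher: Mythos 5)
Your proposal is correct and follows essentially the same route as the paper: both rest on the containment $\mathbb{Q}(\mathcal{F})_{RSI}\subseteq\mathbb{ISP}_{u}(\mathcal{F})$, the collapse of ultraproducts over a finite set of finite algebras, and then an application of Theorem \ref{thm:Testigos para Q con M} (the paper takes $\mathcal{S}=\mathbb{P}_{u}(\mathcal{Q}_{RSI})$, which by your very argument coincides with $\mathcal{Q}_{RSI}$ in this case, so invoking the remark after the theorem is only a cosmetic difference). No gaps to report.
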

\begin{proof}
For any class $\mathcal{K}$ we have $\mathbb{Q}(\mathcal{K})_{RSI}\subseteq\mathbb{ISP}_{u}(\mathcal{K})$.
Thus if $\mathcal{Q}$ is finitely generated, then $\mathcal{Q}_{RSI}$
is (up to isomorphic copies) a finite set of finite algebras, and
the corollary follows at once from Theorem \ref{thm:Testigos para Q con M}.
\end{proof}
Recall that an algebra $\mathbf{A}$ is \emph{finitely subdirectly
irreducible} if its diagonal congruence is meet irreducible in the
congruence lattice of $\mathbf{A}$. It is \emph{subdirectly irreducible
}if the diagonal is completely meet irreducible. For a variety $\mathcal{V}$
we write ($\mathcal{V}_{FSI}$) $\mathcal{V}_{SI}$ to denote its
class of (finitely) subdirectly irreducible members.

An interesting consequence of Corollary \ref{cor:M+FG implica S(Q_RSI x Q_RSI) alcanza.}
is the following.
\begin{cor}
\label{cor:NU implica decidible}Let $\mathcal{F}$ be a finite set
of finite algebras with a common majority term. It is decidable whether
the \textup{(}quasi\textup{)}variety generated by $\mathcal{F}$ has
surjective epimorphisms.\end{cor}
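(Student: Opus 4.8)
The plan is to deduce decidability from Corollary~\ref{cor:M+FG implica S(Q_RSI x Q_RSI) alcanza.}. Let $\mathcal{Q}=\mathbb{Q}(\mathcal{F})$ be the quasivariety generated by $\mathcal{F}$. Since $\mathcal{F}$ is a finite set of finite algebras sharing a common majority term, $\mathcal{Q}$ is a finitely generated quasivariety with a majority term, so by the corollary, $\mathcal{Q}$ has surjective epimorphisms if and only if $\mathbb{S}(\mathcal{Q}_{RSI}\times\mathcal{Q}_{RSI})$ does. First I would observe that $\mathcal{Q}_{RSI}$ is, up to isomorphism, a finite set of finite algebras: as noted in the proof of the corollary, $\mathbb{Q}(\mathcal{F})_{RSI}\subseteq\mathbb{ISP}_{u}(\mathcal{F})$, and since $\mathcal{F}$ is finite and each member is finite, $\mathbb{ISP}_{u}(\mathcal{F})$ contains only finitely many finite algebras up to isomorphism (ultrapowers of finite algebras are isomorphic to the algebra itself). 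Moreover $\mathcal{Q}_{RSI}$ is \emph{computable} from $\mathcal{F}$: one can effectively list all subalgebras of finite products and quotients appearing in $\mathbb{ISP}_{u}(\mathcal{F})$ up to the relevant cardinality bound and check the relative-subdirect-irreducibility condition, which is a finite computation on finite congruence lattices.

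Next I would argue that $\mathbb{S}(\mathcal{Q}_{RSI}\times\mathcal{Q}_{RSI})$ is again (up to isomorphism) a finite set of finite algebras, and that it too is explicitly computable: having a finite list of the finite algebras in $\mathcal{Q}_{RSI}$, one forms all pairwise products $\mathbf{A}\times\mathbf{B}$ (finitely many, each finite) and lists all their subalgebras (again finitely many, each finite). Call this finite computable set $\mathcal{S}$. The remaining task is to decide whether $\mathcal{S}$ has surjective epimorphisms, equivalently whether some $\mathbf{A}\in\mathcal{S}$ has a proper epic substructure $\mathbf{A}'<_{e}\mathbf{A}$ in $\mathcal{S}$. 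Here I would invoke Theorem~\ref{thm:epicas en clases finitas}: since $\mathcal{S}$ is a finite set of finite structures and the algebras are finite, epic substructurehood is characterised by condition~(2) of that theorem, namely the existence, for each $b\in A$, of a quantifier-free conjunction of atomic formulas (with parameters from $A$ and possibly from $\mathbf{A}$) defining a function in $\mathcal{S}$ and taking the prescribed value.

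The key point making the decision effective is that, over a \emph{finite} set of \emph{finite} structures, every relevant quantity is bounded and checkable by exhaustive search. For a fixed pair $\mathbf{A}'\leq\mathbf{A}$ in $\mathcal{S}$, checking whether $\mathbf{A}'\leq_{e}\mathbf{A}$ in $\mathcal{S}$ amounts to checking, for each $\mathbf{C}\in\mathcal{S}$ and each pair of homomorphisms $g,g':\mathbf{A}\rightarrow\mathbf{C}$ agreeing on $A'$, whether $g=g'$; there are only finitely many such $\mathbf{C}$ and finitely many maps between finite sets, so each homomorphism condition is a finite verification, and the whole check terminates. Running this over all finitely many pairs $\mathbf{A}'<\mathbf{A}$ with both in $\mathcal{S}$ decides whether $\mathcal{S}$ has a proper epic substructure, and hence whether $\mathcal{Q}$ has surjective epimorphisms. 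The analogous argument for the variety $\mathbb{V}(\mathcal{F})$ runs through the variety-version of the reduction (using that $\mathbb{V}(\mathcal{F})$ is finitely generated with a majority term, hence congruence-distributive, so $\mathbb{V}(\mathcal{F})_{SI}=\mathbb{V}(\mathcal{F})_{RSI}$ is again a computable finite set of finite algebras by J\'onsson's lemma), giving the parenthetical claim.

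I expect the main obstacle to be the bookkeeping in the first two paragraphs: rigorously justifying that $\mathcal{Q}_{RSI}$ (and thence $\mathbb{S}(\mathcal{Q}_{RSI}\times\mathcal{Q}_{RSI})$) is not merely finite but \emph{effectively computable} from $\mathcal{F}$, with an explicit a~priori cardinality bound on its members so that the enumeration of candidate algebras, homomorphisms, and defining formulas is genuinely finite. Once that computability is in hand, the decision procedure itself is a routine exhaustive search whose correctness is guaranteed by Theorem~\ref{thm:epicas en clases finitas} and Corollary~\ref{cor:M+FG implica S(Q_RSI x Q_RSI) alcanza.}.
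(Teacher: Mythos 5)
Your proposal is correct and takes essentially the same route as the paper: reduce via Corollary \ref{cor:M+FG implica S(Q_RSI x Q_RSI) alcanza.} to the finite class $\mathbb{S}(\mathcal{Q}_{RSI}\times\mathcal{Q}_{RSI})$ (and, for the variety case, to $\mathbb{S}(\mathcal{V}_{SI}\times\mathcal{V}_{SI})$ via J\'onsson's lemma), then decide by exhaustive search over the finitely many finite algebras and homomorphisms involved; you merely spell out the computability bookkeeping that the paper compresses into ``this is clearly a decidable problem.'' One minor remark: your appeal to Theorem \ref{thm:epicas en clases finitas} is superfluous, since the direct definitional check of $\mathbf{A}'\leq_{e}\mathbf{A}$ that you describe immediately afterwards (quantifying over the finitely many targets $\mathbf{C}$ and pairs of homomorphisms) is all that is needed.
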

\begin{proof}
Let $\mathcal{V}$ be the variety generated by $\mathcal{F}$. By
J\'onsson's lemma \cite{jonssons_lemma} $\mathcal{V}_{SI}\subseteq\mathbb{HSP}_{u}(\mathcal{F})=\mathbb{HS}(\mathcal{F})$
is a finite set of finite structures, and by Corollary \ref{cor:M+FG implica S(Q_RSI x Q_RSI) alcanza.}
it suffices to decide whether $\mathcal{\mathbb{S}}(\mathcal{V}_{SI}\times\mathcal{V}_{SI})$
has surjective epimorphisms, and this is clearly a decidable problem.
If $\mathcal{Q}$ is the quasivariety generated by $\mathcal{F}$,
then $\mathcal{Q}_{RSI}\subseteq\mathbb{ISP}_{u}(\mathcal{F})=\mathbb{IS}(\mathcal{F})$,
and the same reasoning applies.
\end{proof}

\subsection{Arithmetical varieties whose FSI members form a universal class}

A variety $\mathcal{V}$ is \emph{arithmetical} if for every $\mathbf{A}\in\mathcal{V}$
the congruence lattice of $\mathbf{A}$ is distributive and the join
of any two congruences is their composition. For example, the variety
of boolean algebras is arithmetical.
\begin{lem}
\label{lem:termino interpolante para pp en aritmeticas} Let $\mathcal{V}$
be an arithmetical variety such that $\mathcal{V}_{FSI}$ is a universal
class, and let $\varphi(\bar{x},y)$ be a p.p. formula defining a
function in $\mathcal{V}$. Suppose that for all $\mathbf{A}\in\mathcal{V}_{FSI}$,
all $\mathbf{S}\leq\mathbf{A}$ and all $s_{1},\dots,s_{n}\in S$
such that $\mathbf{A}\vDash\exists y\,\varphi(\bar{s},y)$, we have
$\mathbf{S}\vDash\exists y\,\varphi(\bar{s},y)$. Then there is a
term $t(\bar{x})$ such that $\mathcal{V}\vDash\forall\bar{x},y\ \varphi(\bar{x},y)\rightarrow y=t(\bar{x}).$\end{lem}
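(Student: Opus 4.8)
The plan is to apply Theorem~\ref{thm:BP} to the class $\mathcal{V}_{FSI}$ (rather than to $\mathcal{V}$) and the formula $\varphi$, and then to transfer the resulting term from $\mathcal{V}_{FSI}$ back to all of $\mathcal{V}$. Since $\mathcal{V}$ is arithmetical it is congruence permutable and has a majority term $M$ (obtainable from a Pixley term); as $\mathcal{V}_{FSI}\subseteq\mathcal{V}$, this same $M$ is a majority term for $\mathcal{V}_{FSI}$, and $\varphi$ still defines a function there. Moreover, because $\mathcal{V}_{FSI}$ is assumed to be a universal class it is closed under ultraproducts, so $\mathbb{P}_{u}(\mathcal{V}_{FSI})=\mathcal{V}_{FSI}$. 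This is exactly what makes the present hypotheses usable: condition~(2) of Theorem~\ref{thm:BP} applied to $\mathcal{K}=\mathcal{V}_{FSI}$ quantifies only over factors in $\mathbb{P}_{u}(\mathcal{V}_{FSI})=\mathcal{V}_{FSI}$, that is, over finitely subdirectly irreducible algebras, which are precisely the ones the lemma's hypothesis controls. Once condition~(2) is verified for $\mathcal{V}_{FSI}$, Theorem~\ref{thm:BP} delivers a term $t(\bar{x})$ with $\mathcal{V}_{FSI}\vDash\forall\bar{x},y\ \varphi(\bar{x},y)\rightarrow y=t(\bar{x})$.

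The heart of the argument, and the step I expect to be the main obstacle, is verifying condition~(2) of Theorem~\ref{thm:BP} for $\mathcal{V}_{FSI}$. So I would fix $\mathbf{A},\mathbf{B}\in\mathcal{V}_{FSI}$, a subalgebra $\mathbf{S}\leq\mathbf{A}\times\mathbf{B}$, and $\bar{s}\in S^{n}$ at which $[\varphi]^{\mathbf{A}}\times[\varphi]^{\mathbf{B}}$ is defined, with value $(u,v)$, where $u=[\varphi]^{\mathbf{A}}(\bar{a})$, $v=[\varphi]^{\mathbf{B}}(\bar{b})$, and $\bar{a},\bar{b}$ are the coordinate projections of $\bar{s}$; the goal is $(u,v)\in S$. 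Put $\mathbf{A}_{0}:=\pi_{1}(\mathbf{S})\leq\mathbf{A}$ and $\mathbf{B}_{0}:=\pi_{2}(\mathbf{S})\leq\mathbf{B}$, so $\mathbf{S}$ is a subdirect product of $\mathbf{A}_{0}\times\mathbf{B}_{0}$ and $\bar{a}\in A_{0}^{n}$, $\bar{b}\in B_{0}^{n}$. This is where the hypothesis enters: since $\mathbf{A}$ is finitely subdirectly irreducible, $\mathbf{A}_{0}\leq\mathbf{A}$, and $\mathbf{A}\vDash\exists y\,\varphi(\bar{a},y)$, the hypothesis gives $\mathbf{A}_{0}\vDash\exists y\,\varphi(\bar{a},y)$; as $\varphi$ is p.p.\ (hence preserved upward into $\mathbf{A}$) and defines a function, the witness must be $u$, so $u\in A_{0}$ and $[\varphi]^{\mathbf{A}_{0}}(\bar{a})=u$. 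Symmetrically $v\in B_{0}$ and $[\varphi]^{\mathbf{B}_{0}}(\bar{b})=v$.

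It then remains to see that $(u,v)\in S$, and this is where arithmeticity is used. Being congruence permutable, $\mathcal{V}$ admits Fleischer's description of subdirect products: there are congruences $\theta\in\Con(\mathbf{A}_{0})$, $\eta\in\Con(\mathbf{B}_{0})$ and an isomorphism $f:\mathbf{A}_{0}/\theta\rightarrow\mathbf{B}_{0}/\eta$ with $S=\{(x,y)\mid f(x/\theta)=y/\eta\}$. From $(a_{i},b_{i})\in S$ one gets $f(\bar{a}/\theta)=\bar{b}/\eta$. Projecting $\mathbf{A}_{0}\vDash\varphi(\bar{a},u)$ to $\mathbf{A}_{0}/\theta$ yields $[\varphi]^{\mathbf{A}_{0}/\theta}(\bar{a}/\theta)=u/\theta$; applying the isomorphism $f$ and using $f(\bar{a}/\theta)=\bar{b}/\eta$ gives $[\varphi]^{\mathbf{B}_{0}/\eta}(\bar{b}/\eta)=f(u/\theta)$. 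On the other hand, projecting $\mathbf{B}_{0}\vDash\varphi(\bar{b},v)$ to $\mathbf{B}_{0}/\eta$ gives $[\varphi]^{\mathbf{B}_{0}/\eta}(\bar{b}/\eta)=v/\eta$, and since $\varphi$ defines a function I conclude $f(u/\theta)=v/\eta$, i.e.\ $(u,v)\in S$, as required. Note that this disposes of condition~(2) using only binary products of FSI algebras, so no appeal to decomposability of arbitrary powers is needed.

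Finally I would transfer the term $t$ from $\mathcal{V}_{FSI}$ to all of $\mathcal{V}$. Let $\mathbf{A}\in\mathcal{V}$ with $\mathbf{A}\vDash\varphi(\bar{a},b)$; the claim is $b=t^{\mathbf{A}}(\bar{a})$. Writing $\mathbf{A}$ as a subdirect product of subdirectly irreducible algebras $\mathbf{A}_{i}\in\mathcal{V}_{SI}\subseteq\mathcal{V}_{FSI}$ and using that each projection is a homomorphism and $\varphi$ is p.p., one obtains $\mathbf{A}_{i}\vDash\varphi(\pi_{i}\bar{a},\pi_{i}b)$, whence $\pi_{i}b=t^{\mathbf{A}_{i}}(\pi_{i}\bar{a})=\pi_{i}(t^{\mathbf{A}}(\bar{a}))$ for every $i$. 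Since the projections separate the points of $\mathbf{A}$, this forces $b=t^{\mathbf{A}}(\bar{a})$, giving $\mathcal{V}\vDash\forall\bar{x},y\ \varphi(\bar{x},y)\rightarrow y=t(\bar{x})$ and completing the proof.
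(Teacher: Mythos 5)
Your proof is correct, and it takes a genuinely different route from the paper's. The paper never invokes Theorem \ref{thm:BP} in this proof: instead it adds constants $c_{1},\dots,c_{n}$, forms the auxiliary universal class $\mathcal{K}$ of expanded FSI algebras satisfying $\exists y\,\varphi(\bar{c},y)$, proves that $\mathbb{V}(\mathcal{K})$ is arithmetical with $\mathbb{V}(\mathcal{K})_{FSI}=\mathcal{K}$ (via J\'onsson's lemma and a Pixley term), and then chains three external preservation results: the lemma's hypothesis shows $\psi(y):=\varphi(\bar{c},y)$ is preserved by composites of subalgebra inclusions and homomorphisms, hence equivalent in $\mathcal{K}$ to a positive open formula (\cite[Thm. 3.1]{lemas_semanticos}), hence to a conjunction of equations (\cite[Thm. 2.3]{CzelakowskiDziobiak1990}), whose unique solution is then named by a term over $\mathbb{V}(\mathcal{K})$ (\cite[Lemma 7.8]{lemas_semanticos}). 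You instead apply Theorem \ref{thm:BP} directly to $\mathcal{V}_{FSI}$ (legitimate: a Pixley term $p$ yields the majority term $p(x,p(x,y,z),z)$, and universality gives $\mathbb{P}_{u}(\mathcal{V}_{FSI})=\mathcal{V}_{FSI}$), and the genuinely new ingredient you supply is the verification of its condition (2) via Fleischer's lemma: after cutting $\mathbf{A},\mathbf{B}$ down to the projections of $\mathbf{S}$, the lemma's hypothesis places $u$ and $v$ in those projections, and the pullback description of subdirect products in a congruence-permutable variety forces $(u,v)\in S$. The final transfer to $\mathcal{V}$ (the conclusion is a quasi-identity, so it passes from $\mathcal{V}_{SI}\subseteq\mathcal{V}_{FSI}$ to subdirect products) is essentially the same in both arguments. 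What your route buys: it is self-contained relative to this paper, reusing the same engine as Theorem \ref{thm:Testigos para Q con M} plus only the classical Fleischer lemma, and it makes visible exactly where each half of arithmeticity acts, distributivity through the majority term and permutability through Fleischer. What the paper's route buys: it avoids Fleischer and all binary-product bookkeeping, and its intermediate steps (equational definability of $\psi$ over the FSI class) are of independent interest and adapt to settings where one has preservation theorems rather than a near-unanimity term.
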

\begin{proof}
Add new constants $c_{1},\dots,c_{n}$ to the language of $\mathcal{V}$
and let $\mathcal{K}:=\{(\mathbf{A},\bar{a})\mid\mathbf{A}\vDash\exists y\,\varphi(\bar{c},y)\mbox{ and }\mathbf{A}\in\mathcal{V}_{FSI}\}$.
Note that $\psi(y):=\varphi(\bar{c},y)$ defines a nullary function
in $\mathcal{K}$, and this function is defined for every member of
$\mathcal{K}$. Also note that by our assumptions $\mathcal{K}$ is
a universal class. Using J\'onsson's lemma \cite{jonssons_lemma} it
is not hard to show that $\mathbb{V}(\mathcal{K})_{FSI}=\mathcal{K}$.
Since $\mathcal{K}|_{\mathcal{L}}$ is contained in an arithmetical
variety it has a Pixley Term \cite[Thm. 12.5]{BurrisSankappanavar1981},
which also serves as a Pixley Term for $\mathcal{K}$, and thus $\mathbb{V}(\mathcal{K})$
is arithmetical. Next we show that $\psi(y)$ is equivalent to a positive
open formula in $\mathcal{K}$. By \cite[Thm. 3.1]{lemas_semanticos}
it suffices to show that
\begin{itemize}
\item For all $\mathbf{A},\mathbf{B}\in\mathcal{K}$, all $\mathbf{S}\leq\mathbf{A}$,
all $h:\mathbf{S}\rightarrow\mathbf{B}$ and every $a\in A$ we have
that $\mathbf{A}\vDash\psi(a)$ implies $\mathbf{B}\vDash\psi(ha)$.
\end{itemize}
So suppose $\mathbf{A}\vDash\psi(a)$. From our hypothesis and the
fact that $\psi(y)$ defines a function we have $\mathbf{S}\vDash\psi(a)$,
and as $\psi(y)$ is p.p. we obtain $\mathbf{B}\vDash\psi(ha)$. Hence
there is a positive open formula $\beta(y)$ equivalent to $\psi(y)$
in $\mathcal{K}$. Now, \cite[Thm. 2.3]{CzelakowskiDziobiak1990}
implies that there is a conjunction of equations $\alpha(y)$ equivalent
to $\beta(y)$ (and thus to $\psi(y)$) in $\mathcal{K}$. We have
$\mathcal{K}\vDash\exists!y\,\alpha(y)$, and by \cite[Lemma 7.8]{lemas_semanticos}
there is an $\mathcal{L}\cup\{c_{1},\dots,c_{n}\}$-term $t'$ such
that $\mathbb{V}(\mathcal{K})\vDash\alpha(t')$. Let $t(x_{1},\dots,x_{n})$
be an $\mathcal{L}$-term such that $t'=t(\bar{c})$. So, if $\Gamma$
is a set of axioms for $\mathcal{V}_{FSI}$, we have
\[
\Gamma\cup\{\exists y\,\varphi(\bar{c},y)\}\vDash\varphi(\bar{c},t(\bar{c})),
\]
and this implies
\[
\Gamma\vDash\exists y\,\varphi(\bar{c},y)\rightarrow\varphi(\bar{c},t(\bar{c})),
\]
or equivalently
\[
\mathcal{V}_{FSI}\vDash\forall y(\varphi(\bar{c},y)\rightarrow\varphi(\bar{c},t(\bar{c}))).
\]
This and the fact that that $\varphi(\bar{x},y)$ defines a function
in $\mathcal{V}$ yields
\[
\mathcal{V}_{FSI}\vDash\forall\bar{x},y\ \varphi(\bar{x},y)\rightarrow y=t(\bar{x}).
\]
To conclude, note that $\forall\bar{x},y\ \varphi(\bar{x},y)\rightarrow y=t(\bar{x})$
is logically equivalent to a quasi-identity, and since it holds in
$\mathcal{V}_{FSI}$ it must hold in $\mathcal{V}$.\end{proof}
\begin{thm}
\label{thm:Testigos para V aritmetica}Let $\mathcal{V}$ be an arithmetical
variety such that $\mathcal{V}_{FSI}$ is a universal class T.f.a.e.:
\begin{enumerate}
\item $\mathcal{V}$ has surjective epimorphisms.
\item For all $\mathbf{A},\mathbf{B}\in\mathcal{V}$ we have that $\mathbf{A}\leq_{e}\mathbf{B}$
in $\mathcal{V}$ implies $\mathbf{A}=\mathbf{B}$.
\item For all $\mathbf{A},\mathbf{B}\in\mathcal{V}_{FSI}$ we have that
$\mathbf{A}\leq_{e}\mathbf{B}$ in $\mathcal{V}_{FSI}$ implies $\mathbf{A}=\mathbf{B}$.
\item $\mathcal{V}_{FSI}$ has surjective epimorphisms.
\end{enumerate}
\end{thm}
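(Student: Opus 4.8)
The plan is to obtain the two internal equivalences (1)$\Leftrightarrow$(2) and (3)$\Leftrightarrow$(4) for free and to concentrate on (2)$\Leftrightarrow$(3), whose hard half mimics the proof of Theorem \ref{thm:Testigos para Q con M} with Lemma \ref{lem:termino interpolante para pp en aritmeticas} in the place of Theorem \ref{thm:BP}. Since a variety is a quasivariety, (1)$\Leftrightarrow$(2) is just the preliminary equivalence between having surjective epimorphisms and having no proper epic substructures. For (3)$\Leftrightarrow$(4) the same preliminary equivalence applies to $\mathcal{V}_{FSI}$: being universal, $\mathcal{V}_{FSI}$ is closed under substructures, so homomorphic images of its members stay inside it and the passage between epimorphisms and epic substructures goes through unchanged.

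For (2)$\Rightarrow$(3) I would first note that, for any $\mathbf{A}\leq\mathbf{B}$ with $\mathbf{B}\in\mathcal{V}$, the relation $\mathbf{A}\leq_{e}\mathbf{B}$ in $\mathcal{V}_{FSI}$ is equivalent to $\mathbf{A}\leq_{e}\mathbf{B}$ in $\mathcal{V}$. One direction is monotonicity of epicity in the test class. For the other, given $\mathbf{C}\in\mathcal{V}$ and $g,g':\mathbf{B}\to\mathbf{C}$ agreeing on $A$, take a subdirect decomposition $\mathbf{C}\leq\prod_{j}\mathbf{C}_{j}$ with $\mathbf{C}_{j}\in\mathcal{V}_{SI}\subseteq\mathcal{V}_{FSI}$; composing with the projections $\rho_{j}$ and using epicity in $\mathcal{V}_{FSI}$ gives $\rho_{j}g=\rho_{j}g'$ for every $j$, so $g=g'$. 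Consequently, if $\mathbf{A},\mathbf{B}\in\mathcal{V}_{FSI}$ and $\mathbf{A}\leq_{e}\mathbf{B}$ in $\mathcal{V}_{FSI}$, then $\mathbf{A}\leq_{e}\mathbf{B}$ in $\mathcal{V}$, and (2) yields $\mathbf{A}=\mathbf{B}$.

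The real work is (3)$\Rightarrow$(2). Assuming (3), suppose $\mathbf{A}\leq_{e}\mathbf{B}$ in $\mathcal{V}$ and fix $b\in B$; the goal is $b\in A$. By Theorem \ref{epic sii pp definable} there are a p.p.\ formula $\varphi(\bar{x},y)$ defining a function in $\mathcal{V}$ and $\bar{a}$ from $A$ with $[\varphi]^{\mathbf{B}}(\bar{a})=b$. Following Theorem \ref{thm:Testigos para Q con M}, let $\Sigma$ be the p.p.\ diagram of $\mathbf{B}_{A}$, put $\mathcal{K}:=\{\mathbf{C}\mid\mathbf{C}\vDash\Sigma,\ \mathbf{C}|_{\mathcal{L}}\in\mathcal{V}_{FSI}\}$ and $\psi(y):=\varphi(\bar{a},y)$, a nullary function defined throughout $\mathcal{K}$ because $\exists y\,\psi(y)\in\Sigma$. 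Once $\mathcal{K}$ is shown to be universal, the argument in the proof of Lemma \ref{lem:termino interpolante para pp en aritmeticas} applies to $\mathbb{V}(\mathcal{K})$ and $\psi$ and produces an $\mathcal{L}_{A}$-term $t$ with $\mathcal{K}\vDash\forall y\,\psi(y)\to y=t$. Taking a subdirect decomposition $\mathbf{B}\leq\prod_{I}\mathbf{B}_{i}$ with $\mathbf{B}_{i}\in\mathcal{V}_{SI}$ and expanding each factor by $a\mapsto\pi_{i}(a)$, each expanded factor is a homomorphic image of $\mathbf{B}_{A}$ and hence lies in $\mathcal{K}$; as $\forall y\,\psi(y)\to y=t$ is a quasi-identity it holds in the subdirect product $\mathbf{B}_{A}$, and $\mathbf{B}\vDash\varphi(\bar{a},b)$ then gives $b=t^{\mathbf{B}_{A}}\in A$.

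The crux, and the only place where (3) is genuinely used, is the universality of $\mathcal{K}$, i.e.\ its closure under substructures. Given $\mathbf{M}\in\mathcal{K}$ and an $\mathcal{L}_{A}$-substructure $\mathbf{S}\leq\mathbf{M}$, I would apply Lemma \ref{toda pp implica homo} (available since $\mathbf{M}\vDash\Sigma$) to obtain a homomorphism $h:\mathbf{B}_{A}\to\mathbf{M}'$ into an ultrapower $\mathbf{M}'$ of $\mathbf{M}$. Since $\mathbf{M}'|_{\mathcal{L}}\in\mathbb{P}_{u}(\mathcal{V}_{FSI})=\mathcal{V}_{FSI}$ and $h(\mathbf{A}),h(\mathbf{B})\leq\mathbf{M}'|_{\mathcal{L}}$, the preliminary lemma on homomorphic images gives $h(\mathbf{A})\leq_{e}h(\mathbf{B})$ in $\mathcal{V}$, a fortiori in $\mathcal{V}_{FSI}$, so (3) forces $h(A)=h(B)$. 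As $h$ fixes the named constants and $S$ already contains all of them, the image $h(\mathbf{B}_{A})=h(\mathbf{A}_{A})$ sits inside the ultrapower $\mathbf{S}'$ of $\mathbf{S}$; the homomorphism $h:\mathbf{B}_{A}\to\mathbf{S}'$ preserves p.p.\ sentences, so $\mathbf{S}'\vDash\Sigma$, and $\mathbf{S}\equiv\mathbf{S}'$ gives $\mathbf{S}\vDash\Sigma$, that is $\mathbf{S}\in\mathcal{K}$. I expect the delicate point to be precisely this transfer of the \emph{entire} p.p.\ diagram through the ultrapower: it is what furnishes the existential witnesses demanded by the hypothesis of Lemma \ref{lem:termino interpolante para pp en aritmeticas}, something that tracking only the value $b$ (as sufficed in Theorem \ref{thm:Testigos para Q con M}) would not provide.
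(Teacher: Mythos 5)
Your proposal is correct and follows essentially the same path as the paper's proof: the same class $\mathcal{K}=\{\mathbf{C}\in Mod(\Sigma)\mid\mathbf{C}|_{\mathcal{L}}\in\mathcal{V}_{FSI}\}$ cut out by the p.p.\ diagram of $\mathbf{B}_{A}$, the same use of Lemma \ref{toda pp implica homo} together with hypothesis (3) to prove that $\mathcal{K}$ is universal, and then Lemma \ref{lem:termino interpolante para pp en aritmeticas} plus a subdirect decomposition of $\mathbf{B}$ into (relatively) subdirectly irreducible factors to conclude $b=t^{\mathbf{B}_{A}}\in A$. The only deviations are cosmetic: you spell out the implication (2)$\Rightarrow$(3) and the final subdirect-product step, both of which the paper leaves to the reader, and you transfer $\Sigma$ down to a substructure $\mathbf{S}$ via the ultrapower $\mathbf{S}'$ and the elementary equivalence $\mathbf{S}\equiv\mathbf{S}'$, where the paper instead observes that the existential sentences in $\Sigma$ persist from $h(\mathbf{B}_{A})$ to its extension given by the diagonal copy of the substructure.
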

\begin{proof}
We prove (3)$\Rightarrow$(2) which is the only nontrivial implication.
Suppose $\mathbf{A}\leq_{e}\mathbf{B}$ in $\mathcal{V}$ and let
$b\in B$. We shall see that $b\in A$. By Theorem \ref{epic sii pp definable}
there is a p.p.\ $\mathcal{L}$-formula $\varphi\left(\bar{x},y\right)$
defining a function in $\mathcal{V}$, and such that $[\varphi]^{\mathbf{B}}(\bar{a})=b$
for some $\bar{a}\in A^{n}$. Let 
\[
\Sigma:=\{\varepsilon\mid\varepsilon\mbox{ is a p.p.\ sentence of }\mathcal{L}_{A}\mbox{ and }B_{A}\vDash\varepsilon\},
\]
and define 
\[
\mathcal{K}:=\{\mathbf{C}\in Mod(\Sigma)\mid\mathbf{C}|_{\mathcal{L}}\in\mathcal{V}_{FSI}\}.
\]

\begin{claim*}
\label{cl: K universal}$\mathcal{K}$ is a universal class.
\end{claim*}
Since $\mathcal{K}$ is axiomatizable we only need to check that $\mathcal{K}$
is closed under substructures. Let $\mathbf{C}\leq\mathbf{D}\in\mathcal{K}$;
clearly $\mathbf{C}|_{\mathcal{L}}\in\mathcal{V}_{FSI}$, so it remains
to see that $\mathbf{C}\vDash\Sigma$. As $\mathbf{D}\vDash\Sigma$,
Lemma \ref{toda pp implica homo} yields a homomorphism $h:\mathbf{B}_{A}\rightarrow\mathbf{E}$
with $\mathbf{E}$ an ultrapower of $\mathbf{D}$. Note that $\mathbf{E}\in\mathcal{K}$.
Since $h(\mathbf{A})\leq_{e}h(B)$ in $\mathcal{V}$ and $h(\mathbf{A}),h(\mathbf{B})\in\mathcal{V}_{FSI}$,
it follows that $h(A)=h(B)$, because there are no proper epic subalgebras
in $\mathcal{V}_{FSI}$. Now $\mathbf{C}$ is an $\mathcal{L}_{A}$-subalgebra
of $\mathbf{D}$, so $h(B)=h(A)\subseteq C$. Finally, since $h(\mathbf{B})\vDash\Sigma$
and every sentence in $\Sigma$ is existential, we obtain $\mathbf{C}\vDash\Sigma$.
This finishes the proof of the claim.
\begin{claim*}
$\mathbb{V}(\mathcal{K})$ is arithmetical and $\mathbb{V}(\mathcal{K})_{FSI}=\mathcal{K}$.
\end{claim*}
To show that $\mathbb{V}(\mathcal{K})$ is arithmetical we can proceed
as in the proof of Lemma \ref{lem:termino interpolante para pp en aritmeticas}.
We prove $\mathbb{V}(\mathcal{K})_{FSI}=\mathcal{K}$. Note that for
$\mathbf{C}\in\mathcal{K}$ we have that $\mathbf{C}$ and $\mathbf{C}|_{\mathcal{L}}$
have the same congruences; hence every algebra in $\mathcal{K}$ is
FSI. For the other inclusion, J\'onsson's lemma \cite{jonssons_lemma}
produces $\mathbb{V}(\mathcal{K})_{FSI}\subseteq\mathbb{HSP}_{u}(\mathcal{K})$,
and by the first claim $\mathbb{HSP}_{u}(\mathcal{K})=\mathbb{H}(\mathcal{K})$.
So, as $\mathbb{H}(\mathcal{K})\vDash\Sigma$, we have that $\mathbb{V}(\mathcal{K})_{FSI}\vDash\Sigma$
and thus $\mathbb{V}(\mathcal{K})_{FSI}\subseteq\mathcal{K}$.

Next we want to apply Lemma \ref{lem:termino interpolante para pp en aritmeticas}
to $\mathbb{V}(\mathcal{K})$ and $\varphi(\bar{a},y)$, so we need
to check that the hypothesis hold. Take $\mathbf{C}\in\mathcal{K}$
and $\mathbf{S}\leq\mathbf{C}$. Since $\mathcal{K}$ is universal
we have $\mathbf{S}\in\mathcal{K}$, and thus $\mathbf{S}\vDash\exists y\,\varphi(\bar{a},y)$.
Let $t$ be a term such that $\mathbb{V}(\mathcal{K})\vDash\forall y\ \varphi(\bar{a},y)\rightarrow y=t.$
Then $b=t^{\mathbf{B}_{A}}\in A$, and we are done.
\end{proof}
Every discriminator variety (see \cite[Def. 9.3]{BurrisSankappanavar1981}
for the definition) satisfies the hypothesis in Theorem \ref{thm:Testigos para V aritmetica}.
Furthermore, in such a variety every FSI member is simple (i.e., has
exactly two congruences). Writing $\mathcal{V}_{S}$ for the class
of simple members in $\mathcal{V}$ we have the following immediate
consequence of Theorem \ref{thm:Testigos para V aritmetica}.
\begin{cor}
For a discriminator variety $\mathcal{V}$ the following are equivalent.
\begin{enumerate}
\item $\mathcal{V}$ has surjective epimorphisms.
\item For all $\mathbf{A},\mathbf{B}\in\mathcal{V}$ we have that $\mathbf{A}\leq_{e}\mathbf{B}$
in $\mathcal{V}$ implies $\mathbf{A}=\mathbf{B}$.
\item For all $\mathbf{A},\mathbf{B}\in\mathcal{V}_{S}$ we have that $\mathbf{A}\leq_{e}\mathbf{B}$
in $\mathcal{V}_{S}$ implies $\mathbf{A}=\mathbf{B}$.
\item $\mathcal{V}_{S}$ has surjective epimorphisms.
\end{enumerate}
\end{cor}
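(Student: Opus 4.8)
The plan is to deduce the Corollary directly from Theorem \ref{thm:Testigos para V aritmetica} by checking that a discriminator variety meets its hypotheses and that, in this setting, the class $\mathcal{V}_{FSI}$ coincides with $\mathcal{V}_{S}$. Since the preamble to the Corollary already asserts these structural facts, the work amounts to recording why they hold and then reading off the four equivalences.

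First I would recall the standard facts about discriminator varieties (see \cite[Def.\ 9.3]{BurrisSankappanavar1981} and the surrounding development). A discriminator term $t(x,y,z)$ is in particular a Pixley term, so every discriminator variety $\mathcal{V}$ is arithmetical. Moreover, in a discriminator variety the finitely subdirectly irreducible algebras are exactly the simple ones, so $\mathcal{V}_{FSI}=\mathcal{V}_{S}$; an algebra $\mathbf{A}\in\mathcal{V}$ is simple precisely when its interpretation of $t$ is the genuine ternary discriminator function on $A$, and this is expressible by a universal first-order sentence. Hence $\mathcal{V}_{S}$ is a universal class, and therefore so is $\mathcal{V}_{FSI}$. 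This verifies that $\mathcal{V}$ satisfies the hypotheses of Theorem \ref{thm:Testigos para V aritmetica}.

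With these observations in place, the Corollary follows at once. Theorem \ref{thm:Testigos para V aritmetica} gives the equivalence of its own four conditions, namely that $\mathcal{V}$ has surjective epimorphisms, that epic substructures in $\mathcal{V}$ are trivial, that epic substructures in $\mathcal{V}_{FSI}$ are trivial, and that $\mathcal{V}_{FSI}$ has surjective epimorphisms. Substituting $\mathcal{V}_{FSI}=\mathcal{V}_{S}$ into the third and fourth of these yields exactly conditions (3) and (4) of the Corollary, while conditions (1) and (2) are identical in both statements. Thus the four conditions of the Corollary are equivalent.

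I do not expect a genuine obstacle here, since the argument is a transcription of Theorem \ref{thm:Testigos para V aritmetica} once the identification $\mathcal{V}_{FSI}=\mathcal{V}_{S}$ is made. The only point requiring care is to justify that the simple members of a discriminator variety form a universal class (rather than merely being closed under some weaker operation), which is what licenses replacing $\mathcal{V}_{FSI}$ by $\mathcal{V}_{S}$ throughout; this is precisely the content of the remark preceding the Corollary, and is a classical feature of the discriminator.
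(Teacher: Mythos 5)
Your proof is correct and follows exactly the paper's route: the paper states this corollary as an immediate consequence of Theorem \ref{thm:Testigos para V aritmetica}, having asserted in the paragraph preceding it precisely the two facts you verify, namely that every discriminator variety satisfies that theorem's hypotheses (arithmetical, $\mathcal{V}_{FSI}$ universal) and that in such a variety the FSI members are exactly the simple ones, so that $\mathcal{V}_{FSI}=\mathcal{V}_{S}$ may be substituted into conditions (3) and (4). Your write-up simply fills in the standard justifications (the discriminator term is a Pixley term, and simplicity is captured by the universal sentence saying $t$ acts as the discriminator) that the paper delegates to \cite{BurrisSankappanavar1981}.
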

It is not uncommon for a variety arising as the algebrization of a
logic to be a discriminator variety; thus the above corollary could
prove helpful in establishing the Beth definability property for such
a logic.

Another special case relevant to algebraic logic to which Theorem
\ref{thm:Testigos para V aritmetica} applies is given by the class
of Heyting algebras and its subvarieties (none of these are discriminator
varieties with the exception of the class of boolean algebras). Heyting
algebras constitute the algebraic counterpart to intuitionistic logic,
and have proven to be a fertile ground to investigate definability
and interpolation properties of intuitionistic logic and its axiomatic
extensions by algebraic means (see \cite{BezhanishviliMoraschiniRaftery}
and its references).

\thanks{I would like to thank Diego Casta\~no and Tommaso Moraschini for their
insightful discussions during the preparation of this paper.}

\bibliographystyle{plain}
\bibliography{epic}

\end{document}